\author{\sf{David Gamarnik} \thanks {Operations Research Center and Sloan School of Management, MIT, Cambridge, MA, 02139, e-mail: \tt{gamarnik@mit.edu} } \and 
\sf{Dmitry Katz} \thanks{T.J. Watson Research Center, IBM, Yorktown Heights, NY, 10598, e-mail:\tt{dimdim@mit.edu}} \and
 \sf{Sidhant Misra} \thanks{Department of Electrical Engineering and Computer Science, MIT, Cambridge, MA, 02139, e-mail: \tt{sidhant@mit.edu}} }
\newtheorem{thm}{Theorem}
\newtheorem{lem}{Lemma}
\newtheorem{cor}{Corollary}
\theoremstyle{definition}
\newtheorem{defin}{Definition}
\newtheorem{assumption}{Assumption}
\begin{document}

\title{Strong spatial mixing for list coloring of graphs}
\maketitle

\abstract
The property of spatial mixing and 
strong spatial mixing in spin systems has been of interest because of its implications on uniqueness of Gibbs measures on infinite graphs and efficient approximation of counting problems that are otherwise known to be
$\#P$ hard. In the context of coloring, strong spatial mixing has been established for regular trees in \cite{GeStefankovic11} when $q \geq \alpha^{*}  \Delta + 1$ where $q$ the number of colors, $\Delta$ is the degree and 
$\alpha^* = 1.763..$ is the unique solution to $xe^{-1/x} = 1$.
It has also been established in
\cite{GoldbergMartinPaterson05} for bounded degree lattice graphs whenever $q \geq \alpha^* \Delta - \beta$ for some constant $\beta$, where $\Delta$ is the maximum vertex degree of the graph. The latter uses a technique based on recursively constructed coupling of Markov chains
 whereas the former is based
on establishing decay of correlations on the tree. We establish strong spatial mixing of list colorings on arbitrary bounded degree 
triangle-free graphs whenever the size of the list of each vertex $v$ is at least $\alpha \Delta(v) + \beta$ where
$\Delta(v)$ is the degree of vertex $v$ and $\alpha > \alpha ^*$ and $\beta$ is a constant that only depends on $\alpha$. We do this by proving the decay of correlations  via recursive contraction of the distance between the marginals measured with respect to a suitably chosen error function.

\begin{section}{Introduction}

	In this paper we study the problem of list colorings of a graph. We explore the strong spatial mixing property of list colorings on triangle-free graphs which pertains to exponential decay of boundary effects when the list 
	coloring is generated uniformly at random conditioned on the coloring of the boundary. This means
	fixing the color of vertices far away from a vertex $v$ has negligible impact (exponentially decaying correlations) on the probability of $v$ being colored with a certain color in its list. 
	A related but weaker notion is weak spatial mixing.
	Strong spatial mixing is stronger than weak spatial mixing because it requires exponential decay of boundary effects even when some of the vertices near $v$ are conditioned to have fixed colors.
	
	Jonasson in \cite{Jonasson02} showed weak spatial mixing on regular trees of any degree $\Delta$ whenever the number of colors $q$ is greater than or equal to $\Delta +1$. However the weakest
	conditions for which strong spatial mixing on trees has been established thus far is by Ge and Stefankovic \cite{GeStefankovic11} for $q \geq \alpha^* \Delta + 1$ where $\alpha^* = 1.763..$ is the unique 
	solution to $xe^{-1/x} = 1$. For lattice graphs (or more generally triangle-free amenable graphs)
	 strong spatial
	mixing was established by Goldberg, Martin and Paterson in \cite{GoldbergMartinPaterson05} for $q \geq \alpha^*  \Delta - \beta$ for a fixed constant $\beta$. In fact their result also holds when the 
	setting is generalized to the list coloring problem. In this paper we generalize these results under a mildly stronger condition by establishing strong spatial mixing of list colorings on arbitrary bounded degree
	 triangle free graphs whenever the size of
	the list of each vertex $v$ is at least $\alpha \Delta(v) + \beta$, where $\Delta(v)$ is the degree of $v$, $\alpha$ satisfies $\alpha > \alpha ^*$ and $\beta$ is a constant that only depends on $\alpha$.
	
	The notion of spatial
	mixing is closely connected to the uniqueness of the infinite volume Gibbs measure on the spin system defined by the list coloring problem. In fact weak spatial mixing is a sufficient condition for there to be  a unique 
	Gibbs measure. Strong spatial mixing is also closely related to the problem of approximately counting the number of valid colorings of a graph which is the partition function of the Gibbs measure. For amenable
	graphs strong spatial mixing also implies rapid mixing
	 of Glauber dynamics which leads to efficient randomized approximation algorithms for computing the total number of valid colorings of a graph, 
	e.g. in \cite{Jerrum95}, \cite{HayesVigoda05}, \cite{Vigoda00}, \cite{Molloy04} etc. 
	The decay of correlations property similar to strong spatial mixing has also been shown to lead to \textit{deterministic} FPTAS (Fully Polynomial Time Approximation Scheme)
	 for computing the partition function of the Gibbs measure. This technique was introduced by Bandyopadhyay and Gamarnik in \cite{BandyopadhyayGamarnik06} (conference version in SODA'06)
	 and Weitz in \cite{Weitz06} and has been subsequently employed by
	 Gamarnik and Katz \cite	{GamarnikKatz08} for the list coloring problem. Since decay of correlations implies the uniqueness of Gibbs measure on regular trees and regular trees represent 
	 maximal growth of the size of the neighborhood for a given degree, it is a general conjecture that efficient 
	 approximability of the counting problem coincides with the uniqueness of Gibbs measure on regular trees. More precisely the conjecture states that 
	 an FPTAS for counting colorings exists for any arbitrary graph whenever $q \geq \Delta + 2$. We are still very far from proving this conjecture or even establishing strong spatial mixing under this condition.
	
	The setup of this paper is similar to \cite{GamarnikKatz08} . In \cite{GamarnikKatz08} it was shown that the logarithm of the ratio of the marginals induced by the two different boundary conditions
	 contract in $\ell_{\infty}$ norm as we move away 
	from the boundary whenever $|L(v)| \geq \alpha \Delta(v) + \beta$ where $\alpha > \alpha^{**} \approx 2.78..$ and $\beta$ is a constant that only depends on $\alpha$.
	In this paper we measure the distance with respect to a conveniently chosen error function which allows us to tighten the contraction argument and relax the 
	required condition to $\alpha > \alpha^* \approx 1.76 ..$. This also means that the Gibbs measure on such graphs is unique. 
	However unlike \cite{GamarnikKatz08} the result of this paper does not directly lead to an FPTAS for counting colorings. 
	We
	give more details about this later in the paper.
	
	The rest of the paper is organized as follows. 
	In Section \ref{sec:def} we introduce the notation, basic definitions and preliminary concepts. Also in this section we provide the statement of our main result and discuss in detail its implications and 
	connections to previous results. In Section \ref{sec:prelim} we establish some preliminary technical results. In Section 
	\ref{sec:general} we prove the main result of this paper. We end the paper with some concluding remarks and discuss directions for future research.

\end{section}

%%%%%%%%%%%%%%%%%%%%%%%Definitions and statement of main result %%%%%%%%%%%%%%%

\begin{section}{Definitions and Main Result} \label{sec:def}
	We denote by $\mathcal{G} = (\mathcal{V}, \mathcal{E})$ an infinite graph with the set of vertices and edges given by $\mathcal{V}$ and $\mathcal{E}$. For a fixed vertex $v \in \mathcal{V}$
	we denote by $\Delta(v)$ the degree of $v$ and by $\Delta$ the maximum degree of the graph, i.e. $\Delta = \max_{v \in \mathcal{V}} \Delta(v) < \infty$.
	 The distance between two vertices $v_1$ and $v_2$ in $\mathcal{V}$ is denoted by $d(v_1,v_2)$ which might be infinite if $v_1$ and $v_2$ belong to 
	two different connected components of $\mathcal{G}$. For two finite subsets of vertices $\Psi_1 \subset \mathcal{V}$ and 
	$\Psi_2 \subset \mathcal{V}$, the distance between them is defined as $d(\Psi_1,\Psi_2) = \min \{ d(v_1,v_2) : v_1 \in \Psi_1 , \ v_2 \in \Psi_2\}$.
	 We assume $\{ 1,2, \ldots, q\}$ to be the set of
	 all colors.
	Each vertex $v \in \mathcal{V}$ is associated with a finite list of colors $L(v) \subseteq \{ 1,2, \ldots, q\}$ and $\mathcal{L} = (L(v): v\in \mathcal{V})$ is the sequence of lists. The total variational distance between two discrete
	measures $\mu_1$ and $\mu_2$ on a finite or countable sample space $\Omega$ is given by $||\mu_1 - \mu_2||$ and is defined as $||\mu_1 - \mu_2|| = \sum_{\omega \in \Omega} |\mu_1(\omega) - \mu_2(\omega)|$.

	A valid list coloring $C$ of $\mathcal{G}$ is an assignment to each vertex $v \in \mathcal{V}$, a color $c(v) \in L(v)$ such that no two adjacent vertices have the same color. 
	A measure $\mu$ on the set of all valid colorings of an infinite graph $\mathcal{G}$ is called an infinite volume Gibbs measure with the uniform specification if, for any finite region $\Psi \subseteq \mathcal{G}$,
	the distribution induced on $\Psi$ by $\mu$ conditioned on any coloring $C$ of the vertices $\mathcal{V} \backslash \Psi$ 
	is the uniform conditional distribution on the set of all valid colorings of $\Psi$. We denote this distribution by $\mu_{\Psi}^{C}$. 
	For any finite subset $\Psi \subset\mathcal{G}$, let $\partial \Psi$ denote the boundary of $\Psi$, i.e. the set of vertices which are adjacent to some vertex in $\Psi$ but are not a part of $\Psi$.

	\begin{defin} \label{def:ssm}
		The infinite volume Gibbs measure $\mu$ on $\mathcal{G}$ is said to have strong spatial mixing (with exponentially decaying correlations) if there exists positive constants $A$ and $\theta$ such that 
		for any finite region $\Psi \subset \mathcal{G}$, any two colorings
		$C_1, C_2$ (where 'free' vertices, i.e. vertices to which no color has been assigned, are also allowed) of $\mathcal{V} \backslash \Psi$ which differ only on a subset $W \subset \partial \Psi$,
		and any subset $\Lambda \subseteq \Psi$, 
		\begin{align}
			||\mu_{\Psi}^{C_1} - \mu_{\Psi}^{C_2}||_{\Lambda} \leq A |\Lambda| e^{-\theta d(\Lambda,W)}.
		\end{align}
		Here $||\mu_{\Psi}^{C_1} - \mu_{\Psi}^{C_2}||_{\Lambda}$ denotes the total variational distance between the two distributions $\mu_{\Psi}^{C_1}$ and $\mu_{\Psi}^{C_2}$ restricted to the set $\Lambda$.
	\end{defin}

	We have used the definition of strong spatial mixing from Weitz's PhD thesis \cite{Weitz04}. As mentioned in \cite{Weitz04},
	 this definition of strong spatial mixing is appropriate for general graphs. A similar definition is used in \cite{GoldbergMartinPaterson05}, where the set
	 $\mathcal{W}$ of disagreement was restricted to be a single vertex. This definition is more relevant in the context of lattice graphs (or more generally amenable graphs), where the neighborhood of 
	 a vertex grows slowly with distance from the vertex. In that context, the definition involving one vertex disagreement and the one we have adopted are essentially the same. 
	 	 
	Let $\alpha^{*} = 1.76..$ be the unique root of the equation 
	\begin{align*}
		x e^{-\frac{1}{x}} = 1.
	\end{align*}	
	For our purposes we will assume that the  graph list pair $(\mathcal{G},\mathcal{L})$ satisfies the following.
	\begin{assumption} \label{assume}
		The graph $\mathcal{G}$ is triangle-free.
		The size of the list of each vertex $v$ satisfies
		\begin{align} \label{maincondition}
			& |L(v)| \geq \alpha \Delta(v) + \beta. 
		\end{align}
		for some constant $\alpha > \alpha^*$ and $\beta = \beta(\alpha) \geq \frac{\sqrt{2}}{\sqrt{2} - 1}$ is such that	
		\begin{align*}
			( 1 - 1/\beta ) \alpha e^{-\frac{1}{\alpha} (1+ 1/\beta)} > 1.
		\end{align*}
	\end{assumption}
	
	Using the above assumption we now state our main result.
	\begin{thm} \label{thm:mainresult}
		Suppose Assumption \ref{assume} holds for the graph list pair ($\mathcal{G}, \mathcal{L}$).
		Then the Gibbs measure with the uniform specification on $(\mathcal{G}, \mathcal{L})$ satisfies strong spatial mixing with exponentially decaying correlations.
	\end{thm}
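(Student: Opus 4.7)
The plan is to reduce the strong spatial mixing bound to a single-vertex, single-color decay estimate, and then establish that estimate via a recursive contraction argument in a carefully chosen norm on pairs of marginal vectors. By the triangle inequality,
\[
||\mu_\Psi^{C_1} - \mu_\Psi^{C_2}||_\Lambda \leq \sum_{v_0 \in \Lambda} \sum_{c \in L(v_0)} \bigl|\mu_\Psi^{C_1}(c(v_0) = c) - \mu_\Psi^{C_2}(c(v_0) = c)\bigr|,
\]
so it suffices to show that each summand decays like $e^{-\theta d(v_0, W)}$ for a universal $\theta > 0$; the factor $|\Lambda|$ in Definition \ref{def:ssm} arises from the outer sum and the finite factor $q \geq |L(v_0)|$ is absorbed into the constant $A$.

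I would next fix $v_0$ and expand its marginal at color $c$ by removing $v_0$ from $\Psi$ and accounting for the restriction that none of its neighbors take color $c$. The key structural fact enabled by the triangle-free assumption is that the neighbors $u_1,\ldots,u_d$ of $v_0$ are pairwise non-adjacent, so they may be processed sequentially: the event that no $u_i$ takes color $c$ factors as a telescoping product of conditional marginals, each a single-vertex marginal in a modified subproblem obtained by deleting one color from one list. This is the same recursive setup used in \cite{GamarnikKatz08}; it expresses the marginal at $v_0$ in terms of marginals at vertices one BFS layer closer to $W$, and unrolling reduces the problem to controlling a per-layer contraction factor.

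The technical core is contraction measured through a carefully designed error function. I would introduce an error functional $\Phi$ on pairs of marginal vectors of the form $\Phi(p,p') = \sum_c \phi(p_c, p'_c)$, with $\phi$ concave and tuned to dampen contributions from marginals near the boundary of the simplex, so that the one-step recursion satisfies a Lipschitz-type bound with rate roughly $e^{-1/\alpha}$ per incoming edge. Accumulating this over the $\Delta(v)$ neighbors of $v$ and invoking the hypothesis $|L(v)| \geq \alpha \Delta(v) + \beta$ from Assumption \ref{assume}, the per-layer contraction is controlled by the reciprocal of the quantity $(1 - 1/\beta)\alpha e^{-(1+1/\beta)/\alpha}$ that Assumption \ref{assume} forces to exceed $1$, yielding a strict contraction rate $e^{-\theta}$ independent of depth. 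Iterating over $d(v_0, W)$ layers then produces the required exponential decay.

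The main obstacle is the construction of $\Phi$ that realizes this sharper per-edge rate. The straightforward $\ell_\infty$ log-ratio analysis of \cite{GamarnikKatz08} is too pessimistic and only yields the weaker threshold $\alpha > \alpha^{**} \approx 2.78$; reaching $\alpha > \alpha^*$ requires weighting the log-ratio against a function that vanishes at the extreme values of the marginal, so as to exploit the convexity of the coloring recursion and produce a contraction constant that matches the root of $xe^{-1/x}=1$. Once the right $\phi$ is identified, verifying the contraction reduces to a calculus computation—bounding the derivative of the recursion in the $\Phi$-induced metric uniformly over admissible marginals—whose bookkeeping must be carried out carefully enough to extract exactly the factor $e^{-1/\alpha}$ per edge, with the slack controlled by $\beta$.
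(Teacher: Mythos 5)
There is a genuine gap, and it occurs at the very first step. The inequality
\begin{align*}
||\mu_\Psi^{C_1} - \mu_\Psi^{C_2}||_\Lambda \leq \sum_{v_0 \in \Lambda} \sum_{c \in L(v_0)} \bigl|\mu_\Psi^{C_1}(c(v_0) = c) - \mu_\Psi^{C_2}(c(v_0) = c)\bigr|
\end{align*}
is not the triangle inequality and is false in general: the total variation distance between two \emph{joint} distributions on $\Lambda$ is not controlled by the sum of the total variation distances of their single-vertex marginals. (Two measures on $\{0,1\}^2$, one uniform on $\{(0,0),(1,1)\}$ and the other uniform on $\{(0,1),(1,0)\}$, have identical one-coordinate marginals but total variation distance $2$.) The correct reduction from a general $\Lambda$ to single vertices must go through the chain rule: write $\mathbf{P}(c(\mathbf{v_{t+1}})=\mathbf{J_{t+1}}\mid C)$ as a telescoping product of \emph{conditional} marginals and induct on $|\Lambda|$. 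This is precisely why the single-vertex estimate has to be proved in its strong form --- for boundary conditions that agree everywhere except on a subset $W$ --- because after conditioning on the colors of $v_1,\ldots,v_t$, those colors become part of the boundary and are identical under both measures; a bound on unconditioned single-vertex marginals is not enough to run the induction. This is the content of the paper's Theorem \ref{lem:singlepointssm} and the induction in the proof of Theorem \ref{thm:mainresult}, and it cannot be bypassed by summing marginal differences over $\Lambda$.

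Two further points are left unresolved in your sketch. First, the recursion you describe naturally moves one layer closer to $\partial\Psi$, so it yields decay in $d(v_0,\partial\Psi)$ (weak spatial mixing); upgrading this to decay in $d(v_0,W)$ requires an additional surgery argument --- delete the boundary vertices near $v_0$ on which $C_1$ and $C_2$ agree and remove their colors from their neighbors' lists, checking that Assumption \ref{assume} is preserved because each deleted color comes with a deleted edge. Second, your proposed error functional $\Phi(p,p')=\sum_c\phi(p_c,p'_c)$ is declared but not constructed; the functional that actually works in the paper is not additive over colors but is the degree-normalized oscillation $\frac{1}{\Delta(v)}\bigl(\max_j\log(x_j/y_j)-\min_j\log(x_j/y_j)\bigr)$, whose contraction by the factor $\bigl((1-1/\beta)\alpha e^{-(1+1/\beta)/\alpha}\bigr)^{-1}<1$ is what delivers the threshold $\alpha>\alpha^*$. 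As written, the core of the argument is a placeholder rather than a proof.
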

	We establish some useful technical results in the next section before presenting the details of the proof in Section (\ref{sec:general}).
\end{section}

%%%%%%%%%%%%%%%%%%% preliminary technical results %%%%%%%%%%%%%%%%%%%%%%%%%%

\begin{section}{Preliminary technical results}	\label{sec:prelim}
	The following theorem establishes strong spatial mixing for the special case when $\Lambda$ consists of a single vertex.
	\begin{thm} \label{lem:singlepointssm}
		Suppose Assumption \ref{assume} holds for the graph list pair $(\mathcal{G},\mathcal{L})$. Then 
		there exists positive constants $B$ and $\gamma$ such that given any finite region $\Psi \subset \mathcal{G}$, any two colorings
		$C_1, C_2$ of $\mathcal{V} \backslash \Psi$ which differ only on a subset $W \subseteq \partial \Psi$, 
		and any vertex $v \in \Psi$ and color $j \in L(v)$,
		\begin{align}
			(1 - \epsilon) \leq \frac{ \mathbf{P}(c(v) = j | C_1)} { \mathbf{P}(c(v) = j | C_2)} \leq (1 + \epsilon)
		\end{align} 
		where $\epsilon = B e^{-\gamma d(v,W)}$
	\end{thm}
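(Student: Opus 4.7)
The plan is to prove Theorem \ref{lem:singlepointssm} inductively on the size of $\Psi$, by setting up a one-step recursion expressing the marginal at $v$ in terms of marginals in a strictly smaller region and showing that this recursion contracts a well-chosen error function on the difference between the two boundary conditions. Concretely, for $v \in \Psi$ with neighbors $u_1,\dots,u_d$ and each color $j \in L(v)$, I would write
\[
\mathbf{P}(c(v) = j \mid C) \;=\; \frac{\prod_{i=1}^d \bigl(1 - \hat p_{u_i}^j\bigr)}{\sum_{k \in L(v)} \prod_{i=1}^d \bigl(1 - \hat p_{u_i}^k\bigr)},
\]
where $\hat p_{u_i}^j$ is the marginal probability that $c(u_i) = j$ in the residual problem obtained by deleting $v$ and, for $l < i$, removing color $j$ from the list $L(u_l)$. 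This is the standard Gamarnik--Katz telescoping of $\mathbf{P}(c(u_1) \neq j,\dots,c(u_d) \neq j)$ and exploits triangle-freeness, which makes the $u_i$'s non-adjacent and hence decouplable via sequential list restrictions. One must also check that the modified problem at each step still satisfies Assumption \ref{assume}: removing $v$ decreases each $\Delta(u_l)$ by $1$, and removing one color from $L(u_l)$ changes the condition by $1 - \alpha < 0$, so the inductive hypothesis continues to apply.

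Next I would introduce a scalar error function $\phi : (0,1) \to \mathbb{R}$ and measure the discrepancy between $C_1$ and $C_2$ at $v$ by $D_v := \max_{j \in L(v)} \bigl|\phi(p_v^j(C_1)) - \phi(p_v^j(C_2))\bigr|$. Differentiating the recursion above and applying the mean value theorem, $D_v$ is bounded by a weighted combination of the $D_{u_i}$'s with an explicit Lipschitz rate $r$ depending on $\alpha, \beta$, the list sizes, degrees, and the $\hat p$-configuration. The key point, and the improvement over \cite{GamarnikKatz08}, where $\phi = \log$ was used and gave only $\alpha > \alpha^{**} \approx 2.78$, is to pick $\phi$ of the form $\phi(p) = \int^p dt / w(t)$ with a weight $w$ tuned so that the worst-case $r$ over all admissible $\hat p$-configurations is minimized. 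The extremizing configuration is expected to be the symmetric fixed point of the recursion on a $\Delta$-regular tree with uniform lists, and the resulting one-dimensional optimality equation is precisely $xe^{-1/x} = 1$, which is what singles out $\alpha^*$ as the threshold.

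With such a $\phi$, Assumption \ref{assume}, in particular $\alpha > \alpha^*$ together with $(1 - 1/\beta)\alpha e^{-(1+1/\beta)/\alpha} > 1$, will yield a uniform contraction $r_0 := \sup r < 1$. Iterating the recursion along the layers between $v$ and $W$ and using the inductive hypothesis, one obtains $D_v \le r_0^{d(v,W)} D_{\max}$ for an a priori bound $D_{\max}$ on $\phi$-differences at the deepest level. The hypothesis $\beta \ge \sqrt{2}/(\sqrt{2}-1)$ is what keeps all marginals bounded away from $0$ and $1$, so that $\phi$ and $\phi^{-1}$ are Lipschitz on the relevant subinterval and the bound on $D_v$ translates into the required ratio bound $|p_v^j(C_1)/p_v^j(C_2) - 1| \le B e^{-\gamma d(v,W)}$ with $\gamma = \log(1/r_0)$ and $B$ depending on the Lipschitz constants of $\phi^{\pm 1}$ and on $D_{\max}$.

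The main obstacle will be constructing $\phi$ and showing that $r_0 < 1$ at the sharp threshold. This reduces to a delicate constrained one-variable optimization: one must identify the extremal configuration of the $\hat p_{u_i}^j$ subject to $\hat p_{u_i}^j \in [0,1]$ and $\sum_j \hat p_{u_i}^j \le 1$, verify that the extremum is attained at the symmetric tree fixed point, and check the resulting Lipschitz inequality against the condition $\alpha > \alpha^*$. A secondary technical difficulty is propagating the inductive bound through the sequential list-modifications introduced in the telescoping step without losing the uniform factor $r_0$, which requires tracking the list-size slack quantitatively throughout the recursion.
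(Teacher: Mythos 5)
There is a genuine gap, and it is precisely at the point where ``strong'' differs from ``weak'' spatial mixing. Your plan is to iterate the one-step recursion ``along the layers between $v$ and $W$,'' but the recursion terminates at \emph{every} conditioned vertex, not only at those in $W$: vertices of $\partial\Psi$ on which $C_1$ and $C_2$ agree may be adjacent to $v$ even when $d(v,W)$ is huge. At such a vertex the marginal is a point mass, the log-ratio (or any $\phi$-difference) degenerates, and the a priori bounds of the type in Lemma \ref{lem:probub} that your Lipschitz estimates rely on fail. So the iteration, as you describe it, only yields decay in $d(v,\partial\Psi)$, i.e.\ weak spatial mixing (Theorem \ref{lem:wsm}). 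The paper closes exactly this gap by a separate reduction before any recursion is run: delete from $\mathcal{G}$ every vertex of $\partial\Psi$ within distance $d(v,W)$ of $v$ (on all of which the two colorings agree), remove its color from the lists of its neighbours, and note that Assumption \ref{assume} survives because each lost color is accompanied by a lost edge and $\alpha>\alpha^*>1$. After this surgery the nearest conditioned vertex is at distance $d(v,W)$ from $v$ and weak spatial mixing finishes the proof. You need this step or an equivalent device; your induction on $|\Psi|$ does not supply it.

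The second problem is the error function. Your metric $D_v=\max_{j}\bigl|\phi(p_v^j(C_1))-\phi(p_v^j(C_2))\bigr|$ with a scalar potential $\phi$ is structurally the $\ell_\infty$ metric of \cite{GamarnikKatz08}, and you defer the construction of $\phi$ as the ``main obstacle'' --- but that construction is the entire content of the improvement from $\alpha^{**}\approx 2.78$ to $\alpha^*\approx 1.76$, and the paper does not obtain it by reweighting a scalar potential. It changes the shape of the metric: the quantity that contracts is the degree-normalized \emph{oscillation} $\frac{1}{m}\bigl(\max_{j}\log(x_j/y_j)-\min_{j}\log(x_j/y_j)\bigr)$. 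This works only in combination with the two-color telescoping recursion of Lemma \ref{lem:normalizedRecursion} for the ratio $x_{j_1}/x_{j_2}$, which removes the normalizing sum over $L(v)$ from the analysis entirely, and with Lemma \ref{lem:signs} (the maximum is $\ge 0$ and the minimum is $\le 0$), which lets each one-sided bound be absorbed into the spread. Your plan keeps the full normalized recursion and a max-of-absolute-differences metric, so even granting the optimization over $w$ you would still be fighting the denominator term that costs the extra constant; as written the proposal is a program rather than a proof, and the one-parameter family it optimizes over does not contain the paper's metric.
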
	
	
	We will now show that Theorem \ref{thm:mainresult} follows from Theorem \ref{lem:singlepointssm}. 
	
	\begin{proof}[Proof of Theorem \ref{thm:mainresult}]
	To prove this, we use induction on the size of the subset $\Lambda$. The base case with $|\Lambda| = 1$ is equivalent to the statement of Theorem \ref{lem:singlepointssm}. Assume that the statement of 
	Theorem \ref{thm:mainresult} is true whenever $|\Lambda| \leq t$ for some integer $t \geq1$. We will use this to prove that the statement holds when $|\Lambda| = t+1$. 
	Let the vertices in $\Lambda$ be $v_1,v_2, \ldots, v_{t+1}$. Let $\mathbf{v_k} = (v_1, \ldots, v_k)$ and $\mathbf{J_k} = (j_1, \ldots, j_k)$ where $j_i \in L(v_i), \, 1\leq i \leq k$. Also let $c(\mathbf{v_k}) = \left( c(v_1), c(v_2), \ldots , c(v_k) \right)$ denote the coloring of the vertecies $v_1, v_2, \ldots , v_k$.
	\begin{align*}
		\mathbf{P}(c    (\mathbf{v_{t+1}}) = \mathbf{   J_{t+1}} | C_1) = &\mathbf{P}(c(\mathbf{v_t}) = \mathbf{J_t} | C_1) \mathbf{P}(c(v_{t+1} = j_{t+1} ) |  c(\mathbf{v_t}) = \mathbf{J_t}, C_1 ) \\
		\leq& (1+ \epsilon) \mathbf{P}(c(\mathbf{v_t}) = \mathbf{J_t} | C_1) \mathbf{P}(c(v_{t+1} = j_{t+1} ) |  c(\mathbf{v_t}) = \mathbf{J_t}, C_2 ).	
	\end{align*}
         The inequality in the last statement follows from Theorem \ref{lem:singlepointssm}.
         This gives
         \begin{align*}
         		\mathbf{P}(c(\mathbf{v_{t+1}}) = \mathbf{J_{t+1}} | C_1) &  - \mathbf{P}(c(\mathbf{v_{t+1}}) = \mathbf{J_{t+1}} | C_2) \\
		\leq & (1+ \epsilon) \mathbf{P}(c(\mathbf{v_t}) = \mathbf{J_t} | C_1) \mathbf{P}(c(v_{t+1} = j_{t+1} ) |  c(\mathbf{v_t}) = \mathbf{J_t}, C_2 ) \\
		-& \mathbf{P}(c(\mathbf{v_t}) = \mathbf{J_t} | C_2) \mathbf{P}(c(v_{t+1} = j_{t+1} ) |  c(\mathbf{v_t}) = \mathbf{J_t}, C_2) \\
		=& \epsilon  \mathbf{P}(c(\mathbf{v_t}) = \mathbf{J_t} | C_1) \mathbf{P}(c(v_{t+1} = j_{t+1} ) |  c(\mathbf{v_t}) = \mathbf{J_t}, C_2) \\
		+&  \mathbf{P}(c(v_{t+1} = j_{t+1} ) |  c(\mathbf{v_t}) = \mathbf{J_t}, C_2)  \{  \mathbf{P}(c(\mathbf{v_t}) = \mathbf{J_t} | C_1) - \mathbf{P}(c(\mathbf{v_t}) = \mathbf{J_t} | C_2) \}
         \end{align*}
         Similarly 
         \begin{align*}
         		\mathbf{P}(c(\mathbf{v_{t+1}}) = \mathbf{J_{t+1}} | C_1)  & - \mathbf{P}(c(\mathbf{v_{t+1}}) = \mathbf{J_{t+1}} | C_2) \\
		\geq & - \epsilon  \mathbf{P}(c(\mathbf{v_t}) = \mathbf{J_t} | C_1) \mathbf{P}(c(v_{t+1} = j_{t+1} ) |  c(\mathbf{v_t}) = \mathbf{J_t}, C_2 ) \\
		+ & \mathbf{P}(c(v_{t+1} = j_{t+1} ) |  c(\mathbf{v_t}) = \mathbf{J_t}, C_2)  \{  \mathbf{P}(c(\mathbf{v_t}) = \mathbf{J_t} | C_1) - \mathbf{P}(c(\mathbf{v_t}) = \mathbf{J_t} | C_2) \}
         \end{align*}
         
         Combining the above, we get 
         \begin{align*}
         		  | \mathbf{P}  (c(\mathbf{v_{t+1}}) = \mathbf{J_{t+1}} | &C_1)  -   \mathbf{P}(c(\mathbf{v_{t+1}}) = \mathbf{J_{t+1}} | C_2) | \\
		   \leq& \epsilon  \mathbf{P}(c(\mathbf{v_t}) = \mathbf{J_t} | C_1)
		 \mathbf{P}(c(v_{t+1} = j_{t+1} ) |  c(\mathbf{v_t}) = \mathbf{J_t}, C_2)  \\
		+ &\mathbf{P}(c(v_{t+1} = j_{t+1} ) |  c(\mathbf{v_t}) = \mathbf{J_t}, C_2)  \  | \{ \mathbf{P}(c(\mathbf{v_t}) = \mathbf{J_t} | C_1) - \mathbf{P}(c(\mathbf{v_t}) = \mathbf{J_t} | C_2) \} |
         \end{align*}
         We can now bound the total variational distance $||\mu_{\Psi}^{C_1} - \mu_{\Psi}^{C_2}||_{\Lambda}$ as follows.
   	\begin{align*}
		\sum_{j_i \in L(v_i), \  1 \leq i \leq t+1} & | \mathbf{P}(c(\mathbf{v_{t+1}}) = \mathbf{J_{t+1}} | C_1)  - \mathbf{P}(c(\mathbf{v_{t+1}}) = \mathbf{J_{t+1}} | C_2)  | \\
		\leq &\sum_{j_i \in L(v_i), \  1 \leq i \leq t+1} \epsilon  \mathbf{P}(c(\mathbf{v_t}) = \mathbf{J_t} | C_1)
		 \mathbf{P}(c(v_{t+1} = j_{t+1} ) |  c(\mathbf{v_t}) = \mathbf{J_t}, C_2 ) \\
		  +&  \sum_{j_i \in L(v_i), \  1 \leq i \leq t+1} \mathbf{P}(c(v_{t+1} = j_{t+1} ) |  c(\mathbf{v_t}) = \mathbf{J_t}, C_2)  \  | \{ \mathbf{P}(c(\mathbf{v_t}) = \mathbf{J_t} | C_1) - \mathbf{P}(c(\mathbf{v_t}) = \mathbf{J_t} | C_2) \} | \\
		 \leq & \epsilon + ||\mu_{\Psi}^{C_1} - \mu_{\Psi}^{C_2}||_{\Lambda \backslash v_{t+1}}		\\
		 \leq & (t+1) \epsilon.
	\end{align*}  
	where the last statement follows from the induction hypothesis. This completes the induction argument.
	\end{proof}

	So, in order to establish Theorem \ref{thm:mainresult}, it is enough to show that Theorem \ref{lem:singlepointssm} is true. We claim that Theorem \ref{lem:singlepointssm} follows from the theorem below which
	establishes weak spatial mixing whenever Assumption \ref{assume} holds. In other words under Assumption \ref{assume}, strong spatial mixing of list colorings for marginals of a single vertex holds whenever
	weak spatial mixing holds. In fact $\mathcal{G}$ need not be triangle-free for this implication to be true as will be clear from the proof below. 
	\begin{thm} \label{lem:wsm}
		Suppose Assumption \ref{assume} holds for the graph list pair $(\mathcal{G},\mathcal{L})$.Then there exist positive constants $B$ and $\gamma$ such that 
		given any finite region $\Psi \subset \mathcal{G}$, any two colorings
		$C_1, C_2$ of $\mathcal{V} \backslash \Psi$,
		\begin{align}
			(1 - \epsilon) \leq \frac{ \mathbf{P}(c(v) = j | C_1)} { \mathbf{P}(c(v) = j | C_2)} \leq (1 + \epsilon)
		\end{align} 
		where $\epsilon = B e^{-\gamma d(v,\partial \Psi)}$.
	\end{thm}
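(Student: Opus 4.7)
The plan is to analyze the marginals $\mathbf{P}(c(v) = j \mid C_i)$ via a recursion that unfolds the graph around $v$ and exhibits the two marginals as fixed points of the same recursive map applied to different boundary data. For each color $j \in L(v)$, I would track how the ratio $R_{v,j} = \mathbf{P}(c(v)=j \mid C_1)/\mathbf{P}(c(v)=j \mid C_2)$ depends on analogous ratios at the neighbors $u_1,\ldots,u_k$ of $v$ after $v$ is removed. Because $\mathcal{G}$ is triangle-free, the neighbors of $v$ are pairwise non-adjacent, so after conditioning on the color at $v$ the resulting system decouples in a way that yields the clean recursion
\begin{align*}
\mathbf{P}(c(v)=j) = \frac{\prod_{l=1}^{k} \bigl(1 - \mathbf{P}^{(v)}(c(u_l)=j)\bigr)}{\sum_{j' \in L(v)} \prod_{l=1}^{k} \bigl(1 - \mathbf{P}^{(v)}(c(u_l)=j')\bigr)},
\end{align*}
where $\mathbf{P}^{(v)}$ denotes the marginal in the graph with $v$ deleted. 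Iterating this defines a computation tree (or the standard self-avoiding walk tree) whose depth matches $d(v,\partial \Psi)$, so boundary information enters only at depth $\geq d(v,\partial \Psi)$.

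Next I would introduce the error function that the paper promises to replace the crude $\ell_\infty$ log-ratio used in \cite{GamarnikKatz08}. The idea is to measure the discrepancy not by $\max_j |\log R_{v,j}|$ directly but by a function $\Phi$ of the per-color errors $\delta_{v,j} := \mathbf{P}(c(v)=j\mid C_1) - \mathbf{P}(c(v)=j\mid C_2)$ (or of $R_{v,j}-1$) that is adapted to the structure of the recursion above. The hope is that $\Phi$ behaves concavely/subadditively in just the right way so that when one differentiates the recursion and applies the mean value theorem, the sum $\sum_l$ over the $k \leq \Delta(v)$ children can be combined into a single expression whose growth is governed by the sharper threshold $xe^{-1/x}=1$ rather than by the looser threshold giving $\alpha^{**}$.

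The main technical step, and in my view the main obstacle, is the contraction estimate: showing that with an appropriate choice of $\Phi$ there is a constant $\rho = \rho(\alpha,\beta) < 1$ such that
\begin{align*}
\Phi(\text{errors at } v) \leq \rho \cdot \max_{l} \Phi(\text{errors at } u_l).
\end{align*}
This is where Assumption \ref{assume} enters: the condition $(1-1/\beta)\alpha e^{-(1+1/\beta)/\alpha} > 1$ should emerge as exactly the requirement that the derivative bound on the recursion, after being summed over $\Delta(v) \leq (|L(v)|-\beta)/\alpha$ children and accounting for the slack in $\beta$, leaves room below $1$. The condition $\beta \geq \sqrt{2}/(\sqrt{2}-1)$ is likely needed to ensure that all marginals $\mathbf{P}(c(u_l)=j)$ are bounded away from $0$ and $1$ by a controlled amount, so that the derivatives of $\log(1-x)$ remain uniformly bounded and the approximation $xe^{-1/x}$ for the product $\prod_l(1-\mathbf{P}^{(v)}(c(u_l)=j'))$ is accurate.

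Finally, given the one-step contraction, I would iterate it along the computation tree. At the leaves (depth $\geq d(v,\partial \Psi)$) the two boundary conditions $C_1, C_2$ produce errors bounded by a trivial constant (the diameter of the simplex of marginals), so after $d(v,\partial \Psi)$ contractions by $\rho$ the error at $v$ is at most $\rho^{d(v,\partial \Psi)}$ times a constant. Translating this back from $\Phi$ to the ratios $R_{v,j}$ via its Lipschitz behavior near zero yields $|R_{v,j} - 1| \leq B e^{-\gamma d(v,\partial \Psi)}$ with $\gamma = -\log \rho > 0$, which is exactly the bound required in Theorem \ref{lem:wsm}.
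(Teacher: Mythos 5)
Your outline reproduces the paper's strategy at the skeleton level (a recursion for the marginals, an error function, a one-step contraction, iteration over distance), but the three pieces that actually carry the proof are either missing or stated in a form that would fail. First, the recursion you write is not exact: even for triangle-free graphs the events $\{c(u_l)\neq j\}$ in $\mathcal{G}_v$ are correlated through the rest of the graph, so $\mathbf{P}_{\mathcal{G}_v}(c(u_l)\neq j,\ \forall l)$ does not factor into $\prod_l\bigl(1-\mathbf{P}_{\mathcal{G}_v}(c(u_l)=j)\bigr)$. The paper repairs this with a telescoping product in which the $i$-th factor is a marginal with respect to a \emph{modified} list $\mathcal{L}_{i,j}$ (or $\mathcal{L}_{i,j_1,j_2}$ for the ratio version), and it is these modified graph--list pairs that must be checked to still satisfy Assumption \ref{assume}. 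Triangle-freeness is not what makes the recursion clean; it is used only in proving the upper bound $\mathbf{P}(c(v)=j)\leq\bigl(m\alpha e^{-(1+1/\beta)/\alpha}\bigr)^{-1}$, where one conditions on the distance-two vertices so that the neighbors of $v$ become independent and $\sum_j t_{ij}\leq 1$.

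Second, the error function cannot be left as a ``hope'': the paper's concrete choice is the oscillation of the log-ratios, $\mathbf{E}(\mathbf{x},\mathbf{y})=\max_{j}\log(x_j/y_j)-\min_{j}\log(x_j/y_j)$, and the step that makes the contraction close is the elementary sign lemma ($\max_j\log(x_j/y_j)\geq 0\geq\min_j\log(x_j/y_j)$, since both marginal vectors sum to one), which allows each child's contribution to be bounded by that child's full oscillation. Third, the contraction you propose, $\Phi_v\leq\rho\max_l\Phi_{u_l}$, is not the inequality that holds. The derivative bound on the $i$-th factor is of size $(1-\epsilon)/m_i$ where $m_i$ is the degree of $v_i$ in $\mathcal{G}_v$, so summing over the $m$ children gives $\mathbf{E}(\mathbf{x},\mathbf{y})\leq\sum_i\frac{1-\epsilon}{m_i}\mathbf{E}(\mathbf{x}_i,\mathbf{y}_i)$; for graphs with non-constant degrees this is only a contraction for the \emph{degree-normalized} quantity, $\frac{1}{m}\mathbf{E}(\mathbf{x},\mathbf{y})\leq(1-\epsilon)\max_{i:m_i>0}\frac{1}{m_i}\mathbf{E}(\mathbf{x}_i,\mathbf{y}_i)$, and the final estimate must be unwound for $\mathbf{E}/\Delta(v)$ rather than for $\mathbf{E}$ itself. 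A minor further point: the iteration proceeds in the graph itself, following at each step the single worst child of the current vertex, not along a computation or self-avoiding-walk tree --- the authors note in the conclusion that this is exactly why their argument, unlike \cite{GamarnikKatz08}, does not yield an FPTAS.
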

	We first show how Theorem \ref{lem:singlepointssm} follows from Theorem \ref{lem:wsm}.
	\begin{proof}[Proof of Theorem \ref{lem:singlepointssm}]
		Consider two colorings $C_1, C_2$ of the boundary $\partial \Psi$ of $\Psi$ which differ only on a subset $W \subseteq \partial \Psi$ as in the statement of
		 Theorem \ref{lem:singlepointssm}. Let $d = d(v,W)$.
		 We first construct a new graph list pair $(\mathcal{G}', \mathcal{L}')$ from $(\mathcal{G},\mathcal{L})$. Here
		$\mathcal{G}'$ is obtained from $\mathcal{G}$ by deleting all vertices in $\partial \Psi$ which are at a distance less than $d$ from $v$. Notice that for all such vertices
		$C_1$ and $C_2$ agree. Whenever a vertex $u$ is deleted from $\mathcal{G}$, remove from the lists
		of the neighbors of $u$ the color $c(u)$ which is the color of $u$ under both $C_1$ and $C_2$.
		 This defines the new list $\mathcal{L}'$. In this process, whenever a vertex $u$ loses a color in its 
		list it also loses one of its edges. Also for $\alpha > \alpha^* > 1$, we have $|L(v)| -1 \geq \alpha (\Delta(v)-1)  + \beta$ whenever $|L(v)| \geq \alpha \Delta(v) +\beta$. 
		Therefore, the new graph list pair $(\mathcal{G}',\mathcal{L}')$
		also satisfies Assumption \ref{assume}.
		Define the region $\Psi' \subset \mathcal{G}'$ as the ball of radius $(d-1)$ centered at $v$. Let $D_1$ and $D_2$ be two colorings of $(\Psi')^c$ which agree with $C_1$ and $C_2$ respectively.
		From the way in which $\mathcal{G}',\mathcal{L}'$ is constructed we have
		\begin{align} \label{probs}
			\mathbf{P}_{\mathcal{G},\mathcal{L}}(c(v) = j | C_i) = \mathbf{P}_{\mathcal{G}',\mathcal{L}'}(c(v) = j | D_i) \quad \mbox{for } i=1,2 .  
		\end{align}
		where $\mathbf{P}_{\mathcal{G},\mathcal{L}}(\mathbf{E})$ denotes the probability of the event $\mathbf{E}$ in the graph list pair ($\mathcal{G},\mathcal{L}$).
		If $\mathcal{V}'$ is the set of all vertices of $\mathcal{G}'$, then $D_1$ and $D_2$ assign colors only to vertices in $\mathcal{V'} \backslash \Psi'$.
		So we can apply Theorem \ref{lem:wsm} for the region $\Psi'$ and the proof is complete.
	\end{proof}
	
	So it is sufficient to prove Theorem \ref{lem:wsm} which we defer till section (\ref{sec:general}). We use the rest of this section to discuss some implications 
	of our result and connections between our result and previous established results for strong spatial mixing for coloring of graphs.
	
	The statement in Lemma {\ref{lem:wsm}} is what is referred to as weak spatial mixing \cite{Weitz04}. In general weak spatial mixing is a weaker condition and does not imply strong spatial mixing. This is indeed the case
	when we consider the \textit{coloring} problem of a graph $\mathcal{G}$ by $q$ colors, i.e. the case when the lists $L(v)$ are the same for all $v \in \mathcal{V}$. However, interestingly, 
	as the above argument shows, for the case of 
	\textit{list coloring} strong spatial mixing follows from weak spatial mixing when the graph list pair satisfies Assumption \ref{assume}.	
		
	We observed that the strong spatial mixing result for amenable graphs in \cite{GoldbergMartinPaterson05} also extends to the case of list colorings. Indeed the proof technique only requires a
	local condition similar to that in Assumption \ref{assume} that we have adopted as opposed to a global condition like $q \geq \alpha \Delta + \beta$.
	 Also in \cite{GoldbergMartinPaterson05}, the factor $|\Lambda|$ in the definition (\ref{def:ssm}) was shown to be not necessary which makes their
	statement stronger. We show that this stronger statement is also implied by our result. In particular, assuming Theorem \ref{lem:singlepointssm} is true, we prove the following corollary.
	\begin{cor} \label{lem:gmpdiscussion}
		Suppose the graph list pair $(\mathcal{G},\mathcal{L})$ satisfies Assumption \ref{assume}. Then there exists positive constants $A$ and $\theta$ such that 
		given any finite region $\Psi \subset \mathcal{G}$, any two colorings
		$C_1, C_2$ of the boundary $\partial \Psi$ of $\Psi$ which differ at only one point $f \in \partial \Psi$,
		and any subset $\Lambda \subseteq \Psi$, 
		\begin{align}
			||\mu_{\Psi}^{C_1} - \mu_{\Psi}^{C_2}||_{\Lambda} \leq A e^{-\theta d(\Lambda,f)}.
		\end{align}
	\end{cor}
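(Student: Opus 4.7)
The plan is to revisit the induction used in the proof of Theorem \ref{thm:mainresult} but, crucially, retain the distance-dependent single-vertex bound instead of collapsing it to a worst-case $\epsilon$. Let $f$ be the single vertex on which $C_1$ and $C_2$ disagree, so $W=\{f\}$. Listing the vertices of $\Lambda$ as $v_1,\dots,v_{|\Lambda|}$ and running the one-vertex-at-a-time recursion from the proof of Theorem \ref{thm:mainresult}, each step contributes the bound $B\,e^{-\gamma d(v_i,f)}$ supplied by Theorem \ref{lem:singlepointssm} applied to $v_i$ rather than a uniform maximum. Iterating yields
\begin{align*}
\|\mu_{\Psi}^{C_1}-\mu_{\Psi}^{C_2}\|_{\Lambda}\;\le\;\sum_{v\in\Lambda}B\,e^{-\gamma d(v,f)},
\end{align*}
which is the key improvement over the cruder $|\Lambda|\,\epsilon$ bound used in Theorem \ref{thm:mainresult}.

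Next I would stratify $\Lambda$ by shells around $f$. Writing $d=d(\Lambda,f)$ and using that $\mathcal{G}$ has maximum degree $\Delta$, the sphere of radius $k$ around $f$ contains at most $\Delta(\Delta-1)^{k-1}$ vertices, so
\begin{align*}
\sum_{v\in\Lambda}B\,e^{-\gamma d(v,f)}\;\le\;B\Delta\sum_{k=d}^{\infty}(\Delta-1)^{k-1}e^{-\gamma k}.
\end{align*}
Provided the rate $\gamma$ from Theorem \ref{lem:singlepointssm} satisfies $\gamma>\log(\Delta-1)$, the right-hand side is a geometric tail bounded by $A\,e^{-\theta d(\Lambda,f)}$ with $\theta=\gamma-\log(\Delta-1)>0$ and $A$ a constant depending only on $B,\Delta,\gamma$. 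This delivers exactly the form of the corollary.

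The main obstacle, then, is not the combinatorics of this summation but the quantitative question: is the decay rate $\gamma$ delivered by Theorem \ref{lem:wsm} (and hence Theorem \ref{lem:singlepointssm}) actually larger than $\log(\Delta-1)$? This is a statement about how strong the contraction produced in Section \ref{sec:general} is under Assumption \ref{assume}. If the contraction per step of the recursion is of the form $\rho<1/(\Delta-1)$, then $\gamma\ge\log(1/\rho)>\log(\Delta-1)$ and we are done directly. If the rate coming out of a single contraction step is insufficient, the standard remedy is to boost it: iterate the contraction over $m$ steps and use the resulting rate $\gamma_m=m\gamma$ in the single-vertex SSM, choosing $m$ large enough that $\gamma_m>\log(\Delta-1)$. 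Equivalently, one can strengthen $\beta$ in Assumption \ref{assume} to make the contraction factor as small as needed. Either route produces the desired $\gamma$, after which the shell sum above closes and gives the bound $A\,e^{-\theta d(\Lambda,f)}$ independent of $|\Lambda|$.
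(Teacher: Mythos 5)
Your reduction of the corollary to the bound $||\mu_{\Psi}^{C_1}-\mu_{\Psi}^{C_2}||_{\Lambda}\le\sum_{v\in\Lambda}B e^{-\gamma d(v,f)}$ is legitimate: it is the same telescoping induction as in the proof of Theorem \ref{thm:mainresult}, merely keeping each vertex's own distance to $f$ instead of the worst case. The gap is in the second half. The shell summation converges only if $\gamma>\log(\Delta-1)$, and under Assumption \ref{assume} this is false in general: the rate produced in Section \ref{sec:general} is $\gamma=-\log(1-\epsilon)$ with $1-\epsilon=\bigl((1-1/\beta)\alpha e^{-\frac{1}{\alpha}(1+1/\beta)}\bigr)^{-1}$, a constant depending only on $\alpha,\beta$ and hence bounded independently of $\Delta$ (indeed $\gamma\to 0$ as $\alpha\downarrow\alpha^*$), while $\log(\Delta-1)$ grows with the degree. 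Neither of your proposed repairs works. Iterating the contraction over $m$ steps does not produce a per-unit-distance rate $m\gamma$ --- $m$ steps of the recursion traverse distance $m$, so the rate per unit distance is unchanged. And you cannot strengthen $\beta$: the corollary is asserted under Assumption \ref{assume} as given, and in any case letting $\beta\to\infty$ only drives $1-\epsilon$ down to $(\alpha e^{-1/\alpha})^{-1}$, so $\gamma>\log(\Delta-1)$ would require $\alpha$ of order $\Delta$, i.e.\ roughly $q\gtrsim\Delta^2$, a far stronger hypothesis than the one in force.

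The paper closes the argument by an entirely different and essentially one-line device: reverse the conditioning with Bayes' rule. Writing $\mathbf{P}(\sigma\mid c(f)=j)=\frac{\mathbf{P}(c(f)=j\mid\sigma)}{\mathbf{P}(c(f)=j)}\,\mathbf{P}(\sigma)$ for each coloring $\sigma$ of $\Lambda$, one needs the single-vertex estimate of Theorem \ref{lem:singlepointssm} only at the one vertex $f$, conditioned on the various $\sigma$; each ratio lies in $[1-\epsilon,1+\epsilon]$ with $\epsilon=Ae^{-\theta d(\Lambda,f)}$, and the resulting sum over $\sigma$ is weighted by $\mathbf{P}(\sigma)$, which totals $1$. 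The volume of $\Lambda$ never enters, so no comparison between $\gamma$ and $\log(\Delta-1)$ is needed. You should restructure your argument along these lines rather than trying to force the shell sum to converge.
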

	
	\begin{proof}
	Let the color of $f$ be $j_1$ in $C_1$ and $j_2$ in $C_2$.
	Let $\mathcal{C}(\Lambda)$ be the set of all possible colorings of the set $\Lambda$.
		\begin{align*}
			||\mu_{\Psi}^{C_1} - \mu_{\Psi}^{C_2}||_{\Lambda}  &= \sum_{\sigma \in \mathcal{C}(\Lambda)} |\mathbf{P}(\sigma | c(f) = j_1) - \mathbf{P}(\sigma | c(f) = j_2) | \\
			&=  \sum_{\sigma \in \mathcal{C}(\Lambda)} \left|    \frac{ \mathbf{P}(c(f) = j_1 | \sigma)}{\mathbf{P}(c(f) = j_1)} \mathbf{P}(\sigma)  - \frac{ \mathbf{P}(c(f) = j_2 | \sigma)}{\mathbf{P}(c(f) = j_2)} \mathbf{P}(\sigma) \right|
		\end{align*}	
		For any $j \in L(f)$, using Theorem \ref{lem:singlepointssm} we have for $\epsilon = A e^{-\theta d(\Lambda,f)} $,
		\begin{align*}
			 \frac{ \mathbf{P}(c(f) = j | \sigma)}{\mathbf{P}(c(f) = j)} &=  \frac{ \mathbf{P}(c(f) = j | \sigma)}{   \sum_{\sigma' \in \mathcal{C}(\Lambda)} \mathbf{P}(c(f) = j| \sigma') \mathbf{P}(\sigma')} \\
			   &=  \frac{  \sum_{\sigma' \in \mathcal{C}(\Lambda)} \mathbf{P}(c(f) = j| \sigma) \mathbf{P}(\sigma')}{   \sum_{\sigma' \in \mathcal{C}(\Lambda)} \mathbf{P}(c(f) = j| \sigma') \mathbf{P}(\sigma')} \\
			   & \leq \frac{  \sum_{\sigma' \in \mathcal{C}(\Lambda)} \mathbf{P}(c(f) = j| \sigma') (1 + \epsilon) \mathbf{P}(\sigma')}{   \sum_{\sigma' \in \mathcal{C}(\Lambda)} \mathbf{P}(c(f) = j| \sigma') \mathbf{P}(\sigma')} \\
			   &= 1 + \epsilon.
		\end{align*}
		Similarly we can also prove for any $j \in L(f)$ 
		\begin{align*}
			\frac{ \mathbf{P}(c(f) = j | \sigma)}{\mathbf{P}(c(f) = j)} \geq 1 - \epsilon.
		\end{align*}
		Therefore,
		\begin{align*}
			||\mu_{\Psi}^{C_1} - \mu_{\Psi}^{C_2}||_{\Lambda} \leq  \sum_{\sigma \in \mathcal{C}(\Lambda)} |(1 + \epsilon) - (1 - \epsilon)  | \mathbf{P}(\sigma) = 2 \epsilon.
		\end{align*}
		
	\end{proof}

	The notion of strong spatial mixing we have adopted also implies the uniqueness of Gibbs measure on the spin system described by the list coloring problem. In fact \textit{Weak Spatial Mixing} described in Theorem
	\ref{lem:wsm} is sufficient for the uniqueness of Gibbs measure (see Theorem 2.2 and the discussion following Definition 2.3 in \cite{Weitz04}). We summarize this in the corollary that follows.
	
	 \begin{cor}
	 	Suppose the graph list pair $\mathcal{G},\mathcal{L}$ satisfy Assumption \ref{assume}. Then the infinite volume Gibbs measure on the list colorings of $\mathcal{G}$ is unique. 
	 \end{cor}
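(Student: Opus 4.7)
The plan is to combine the weak spatial mixing result (Theorem \ref{lem:wsm}) with the DLR (Dobrushin--Lanford--Ruelle) consistency equations to force any two candidate infinite volume Gibbs measures to agree on every cylinder event; since cylinder events generate the product $\sigma$-algebra on colorings of $\mathcal{V}$, this yields $\mu_1 = \mu_2$. This is essentially the argument of Theorem 2.2 in \cite{Weitz04} specialized to the list-coloring setting, which is exactly what the corollary's citation points to.

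Concretely, I would first take two Gibbs measures $\mu_1, \mu_2$ with the uniform specification and fix a vertex $v \in \mathcal{V}$, a color $j \in L(v)$, and a finite region $\Psi \ni v$. The DLR equation gives
$$\mu_i\bigl(c(v) = j\bigr) = \int \mu_{\Psi}^{C}\bigl(c(v)=j\bigr)\, d\mu_i(C), \qquad i = 1,2,$$
where $C$ is the restriction to $\mathcal{V}\setminus\Psi$ of a random configuration drawn from $\mu_i$. Choosing an arbitrary reference coloring $C_0$ of $\mathcal{V}\setminus\Psi$, Theorem \ref{lem:wsm} gives
$$\bigl|\mu_{\Psi}^{C}(c(v)=j) - \mu_{\Psi}^{C_0}(c(v)=j)\bigr| \leq B\, e^{-\gamma d(v,\partial\Psi)}$$
uniformly in $C$. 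Integrating against $d\mu_1$ and $d\mu_2$ and subtracting yields $|\mu_1(c(v)=j) - \mu_2(c(v)=j)| \leq 2B e^{-\gamma d(v,\partial\Psi)}$, which vanishes when $\Psi$ is taken to be, say, the ball of radius $R$ around $v$ and $R \to \infty$ (the ball is finite since $\mathcal{G}$ has bounded degree).

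To extend from single-vertex marginals to marginals on an arbitrary finite subset $\Lambda \subset \mathcal{V}$, I would run the same DLR-plus-contraction argument with Theorem \ref{thm:mainresult} (already established) in place of Theorem \ref{lem:wsm}: integrate the bound $||\mu_{\Psi}^{C_1} - \mu_{\Psi}^{C_2}||_{\Lambda} \leq A|\Lambda| e^{-\theta d(\Lambda,\partial\Psi)}$ against $\mu_1$ and $\mu_2$ using a common reference boundary, and let $\Psi \nearrow \mathcal{G}$. This shows that $\mu_1$ and $\mu_2$ assign identical mass to every cylinder event $\{c|_{\Lambda} = \sigma\}$, hence they coincide on the product $\sigma$-algebra.

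The only delicate point, and it is not really an obstacle, is verifying the DLR representation and the observation that $\mu_{\Psi}^{C}$ depends on $C$ only through its trace on $\partial\Psi$, so that the uniform bound from Theorem \ref{lem:wsm} can be pulled outside the $\mu_i$-integration. Both are standard features of Gibbs specifications with nearest-neighbor hard constraints (no two adjacent vertices may share a color), so beyond these routine measurability points the proof is a direct unwinding of the exponential decay already secured in the previous theorems.
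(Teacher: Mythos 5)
Your proof is correct and follows exactly the route the paper intends: the paper gives no argument of its own for this corollary, deferring entirely to Theorem 2.2 of Weitz's thesis, and your DLR-plus-uniform-decay argument is precisely that standard proof, correctly instantiated with Theorem \ref{lem:wsm} for single-vertex marginals and Theorem \ref{thm:mainresult} for general finite $\Lambda$. The measurability and well-posedness caveats you flag are indeed routine here since $\mu_{\Psi}^{C}$ depends on $C$ only through the finite set $\partial\Psi$ and any $C$ arising as the restriction of a valid coloring admits an extension.
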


\end{section}
 	
 %%%%%%%%%%%%% main section begin %%%%%%%%%%%%%%%%%%%%

 \begin{section}{Proof of Theorem \ref{lem:wsm}} \label{sec:general}	
 		Let $v \in \mathcal{V}$ be a fixed vertex of $\mathcal{G}$. Let $m = \Delta(v)$ denote the degree of $v$ and let $v_1, v_2, \ldots, v_m$ be the neighbors of $v$. The statement of the theorem is trivial if $m=0$
		 ($v$ is an isolated vertex).
		 Let $q_v = |L(v)|$ and $q_{v_i} = |L(v_i)|$. Also let $\mathcal{G}_v$ be the graph 	obtained from $\mathcal{G}$ by deleting the vertex $v$. We begin by proving two useful recursions on the 	
		 marginal probabilities in the following lemmas.
	\begin{lem} \label{lem:normalizedRecursion}
		Let $j_1,j_2 \in L(v)$. Let $\mathcal{L}_{i,j_1,j_2}$ denote the list associated with graph $\mathcal{G}_v$ which is
		obtained from $\mathcal{L}$ by removing the color $j_1$ from 
		the lists $L(v_k)$ for $k < i$ and removing the color $j_2$ from the lists $L(v_k)$ for $k>i$ (if any of these lists do not contain the respective color then no change is made to them). Then we have
		\begin{align*}
			\frac{\mathbf{P}_{\mathcal{G},\mathcal{L}}(c(v) = j_1)}{\mathbf{P}_{\mathcal{G},\mathcal{L}}(c(v) = j_2)} = \prod_{i=1}^{m} \frac{1 - \mathbf{P}_{\mathcal{G}_v,L_{i,j_1,j_2}} (c(v_i) = j_1)}{1 - \mathbf{P}_{\mathcal{G}_v,L_{i,j_1,j_2}} (c(v_i) = j_2)}
		\end{align*}
	\end{lem}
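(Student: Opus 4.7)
The plan is to rewrite the ratio of marginals as a ratio of partition functions on the reduced graph $\mathcal{G}_v$, telescope over the neighbors $v_1,\ldots,v_m$ of $v$, and identify each telescope factor with the expression on the right-hand side. This is a purely combinatorial identity; neither the triangle-free assumption nor Assumption~\ref{assume} is used in the argument.

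First, I would use the standard bijection between valid colorings of $(\mathcal{G},\mathcal{L})$ with $c(v)=j$ and valid colorings of $(\mathcal{G}_v,\mathcal{L}^{[j]})$, where $\mathcal{L}^{[j]}$ is obtained from $\mathcal{L}$ by removing the color $j$ from every neighbor list $L(v_i)$ (this is how the constraint imposed by $v$ propagates once $v$ is deleted). Letting $N(\cdot,\cdot)$ denote the number of valid list colorings, the common normalizer $N(\mathcal{G},\mathcal{L})$ cancels and the left-hand side becomes $N(\mathcal{G}_v,\mathcal{L}^{[j_1]})/N(\mathcal{G}_v,\mathcal{L}^{[j_2]})$.

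Next, I would interpolate between the two boundary list systems by a sequence $\mathcal{L}^{(0)}=\mathcal{L}^{[j_2]},\mathcal{L}^{(1)},\ldots,\mathcal{L}^{(m)}=\mathcal{L}^{[j_1]}$, where in $\mathcal{L}^{(k)}$ color $j_1$ is removed from $L(v_i)$ for $i\le k$ and color $j_2$ is removed from $L(v_i)$ for $i>k$. The ratio telescopes as $\prod_{i=1}^m N(\mathcal{G}_v,\mathcal{L}^{(i)})/N(\mathcal{G}_v,\mathcal{L}^{(i-1)})$. The key observation is that $\mathcal{L}^{(i)}$ and $\mathcal{L}^{(i-1)}$ differ only at $v_i$: the former excludes $j_1$ and the latter excludes $j_2$. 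Both agree with $\mathcal{L}_{i,j_1,j_2}$ outside $v_i$, and $\mathcal{L}_{i,j_1,j_2}$ leaves $L(v_i)$ untouched, so the colorings counted by $N(\mathcal{G}_v,\mathcal{L}^{(i)})$ are exactly those counted by $N(\mathcal{G}_v,\mathcal{L}_{i,j_1,j_2})$ that additionally satisfy $c(v_i)\ne j_1$. This yields the factor $1-\mathbf{P}_{\mathcal{G}_v,\mathcal{L}_{i,j_1,j_2}}(c(v_i)=j_1)$, the analogous identity holds for the denominator, and the common factor $N(\mathcal{G}_v,\mathcal{L}_{i,j_1,j_2})$ cancels, producing exactly the $i$-th factor of the claimed product.

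The only minor bookkeeping point is the case where $j_1$ or $j_2$ already fails to lie in some $L(v_i)$; then the nominal ``removal'' is a no-op and the corresponding conditional probability is zero, so the telescope factor equals $1$ and the argument is unaffected. I do not anticipate any serious obstacle: the statement is an identity and the telescoping construction is standard. The real content of the lemma is the precise choice of the intermediate list $\mathcal{L}_{i,j_1,j_2}$, which is what allows the recursion to be iterated cleanly when combined with the second recursion used later in Section~\ref{sec:general}.
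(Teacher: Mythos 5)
Your proposal is correct and follows essentially the same route as the paper: both reduce the ratio of marginals at $v$ to a ratio of coloring counts on $\mathcal{G}_v$ and telescope over the neighbors $v_1,\ldots,v_m$, identifying each factor with $1-\mathbf{P}_{\mathcal{G}_v,\mathcal{L}_{i,j_1,j_2}}(c(v_i)=j)$. The only cosmetic difference is that you carry the interpolation in the list systems $\mathcal{L}^{(k)}$ while the paper telescopes over the events $\{c(v_k)\neq j_1,\ k\le i,\ c(v_k)\neq j_2,\ k>i\}$ under the fixed list $\mathcal{L}$ and then rewrites the resulting conditional probabilities as marginals under $\mathcal{L}_{i,j_1,j_2}$; these are the same computation.
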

		
	\begin{proof}
		Let $Z_{\mathcal{G},\mathcal{L}}(\mathcal{M})$ denote the number of colorings of a finite graph $\mathcal{G}$ with the condition $\mathcal{M}$ satisfied. For example, $Z_{\mathcal{G},\mathcal{L}}(c(v) = j)$
		denotes the number of valid colorings of $\mathcal{G}$ when the color of $v$ is fixed to be $j \in L(v)$.
		We use a telescoping product argument to prove the lemma:
		\begin{align*}
			\frac{\mathbf{P}_{\mathcal{G},\mathcal{L}}(c(v) = j_1)}{\mathbf{P}_{\mathcal{G},\mathcal{L}}(c(v) = j_2)} &= \frac{Z_{\mathcal{G},\mathcal{L}}(c(v) = j_1)}{Z_{\mathcal{G},\mathcal{L}}(c(v) = j_2)} \\
			&= \frac{Z_{\mathcal{G}_v,\mathcal{L}}(c(v_i) \neq j_1, \ 1 \leq i \leq m)}   {  Z_{\mathcal{G}_v,\mathcal{L}}(c(v_i) \neq j_2, \ 1 \leq i \leq m)} \\
			&=  \frac{\mathbf{P}_{\mathcal{G}_v,\mathcal{L}}(c(v_i) \neq j_1, \ 1 \leq i \leq m)}   {  \mathbf{P}_{\mathcal{G}_v,\mathcal{L}}(c(v_i) \neq j_2, \ 1 \leq i \leq m)} \\
			&= \prod_{i=1}^{m} \frac{\mathbf{P}_{\mathcal{G}_v,\mathcal{L}}(c(v_k) \neq j_1, \ 1 \leq k \leq i, \ c(v_k) \neq j_2, \ i+1 \leq k \leq m)}   {  \mathbf{P}_{\mathcal{G}_v,\mathcal{L}}(c(v_k) \neq j_1, \ 1 \leq k \leq i-1, \ c(v_k) \neq j_2, \ i \leq k \leq m)} \\
			&= \prod_{i=1}^{m} \frac{\mathbf{P}_{\mathcal{G}_v,\mathcal{L}}(c(v_i) \neq j_1 \ | c(v_k) \neq j_1, \ 1 \leq k \leq i-1, \ c(v_k) \neq j_2, \ i+1 \leq k \leq m)}   {  \mathbf{P}_{\mathcal{G}_v,\mathcal{L}}(c(v_i) \neq j_2 \ | c(v_k) \neq j_1, \ 1 \leq k \leq i-1, \ c(v_k) \neq j_2, \ i+1 \leq k \leq m)} \\
			&= \prod_{i=1}^{m} \frac{1 - \mathbf{P}_{\mathcal{G}_v,L_{i,j_1,j_2}} (c(v_i) = j_1)}{1 - \mathbf{P}_{\mathcal{G}_v,L_{i,j_1,j_2}} (c(v_i) = j_2)}.
		\end{align*}
	\end{proof}	
	
	The following lemma was proved in \cite{GamarnikKatz08}. We provide the proof here for completeness.
	\begin{lem} \label{lem:recursion}
		Let $j \in L(v)$. Let $\mathcal{L}_{i,j}$ denote the list associated with the graph $\mathcal{G}_v$ which is 
		obtained from $\mathcal{L}$ by 
		removing the color $j$ (if it exists) from the list $L(v_k)$ for $k<i$. Then we have
		\begin{align*}
			\mathbf{P}_{\mathcal{G},\mathcal{L}}(c(v) = j) = \frac{\prod_{i=1}^{m} \left(  1 - \mathbf{P}_{\mathcal{G}_v,\mathcal{L}_{i,j}}(c(v_i) = j)  \right) }{ \sum_{k \in L(v)} \prod_{i=1}^{m} \left(  1 - \mathbf{P}_{\mathcal{G}_v,\mathcal{L}_{i,k}}(c(v_i) = k)  \right) }.
		\end{align*}
	\end{lem}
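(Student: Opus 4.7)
The plan is to mimic the telescoping-product argument from Lemma \ref{lem:normalizedRecursion}, but now applied to an absolute marginal rather than a ratio of two marginals. First I would write
\begin{align*}
\mathbf{P}_{\mathcal{G},\mathcal{L}}(c(v) = j) \;=\; \frac{Z_{\mathcal{G},\mathcal{L}}(c(v)=j)}{\sum_{k\in L(v)} Z_{\mathcal{G},\mathcal{L}}(c(v)=k)},
\end{align*}
using the $Z$-counting notation introduced in the previous proof, which is just the definition of uniform Gibbs measure on valid colorings. The key combinatorial identity is that fixing $c(v)=k$ in $\mathcal{G}$ puts $v$ into a pendant position whose only constraint is on its neighbors, so $Z_{\mathcal{G},\mathcal{L}}(c(v)=k)=Z_{\mathcal{G}_v,\mathcal{L}}\bigl(c(v_i)\neq k,\ 1\le i\le m\bigr)$.

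Next I would normalize by $Z_{\mathcal{G}_v,\mathcal{L}}$ in both numerator and denominator to replace $Z$'s by probabilities on the smaller graph $\mathcal{G}_v$:
\begin{align*}
\mathbf{P}_{\mathcal{G},\mathcal{L}}(c(v) = j) \;=\; \frac{\mathbf{P}_{\mathcal{G}_v,\mathcal{L}}(c(v_i)\neq j,\ 1\le i\le m)}{\sum_{k\in L(v)} \mathbf{P}_{\mathcal{G}_v,\mathcal{L}}(c(v_i)\neq k,\ 1\le i\le m)}.
\end{align*}
For each fixed $k\in L(v)$, I would then apply the same telescoping identity used in the proof of Lemma \ref{lem:normalizedRecursion}, namely
\begin{align*}
\mathbf{P}_{\mathcal{G}_v,\mathcal{L}}(c(v_i)\neq k,\ 1\le i\le m) \;=\; \prod_{i=1}^{m}\mathbf{P}_{\mathcal{G}_v,\mathcal{L}}\bigl(c(v_i)\neq k \,\big|\, c(v_l)\neq k,\ 1\le l\le i-1\bigr).
\end{align*}

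The final step is the same observation that was used implicitly in the previous lemma: conditioning on the event $c(v_l)\neq k$ for $l<i$ under the uniform Gibbs specification is equivalent to deleting the color $k$ from the list $L(v_l)$ for every $l<i$, since both operations restrict the sample space to colorings of $\mathcal{G}_v$ that avoid color $k$ at those vertices. Consequently each factor equals $1-\mathbf{P}_{\mathcal{G}_v,\mathcal{L}_{i,k}}(c(v_i)=k)$, and plugging this product into both the numerator (with $k=j$) and each summand of the denominator yields exactly the claimed expression. There is no real obstacle here; the only thing to be careful about is that the asymmetric list modification in the definition of $\mathcal{L}_{i,j}$ (only for $k<i$, not for $k>i$) matches the one-sided nature of the telescoping conditioning. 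This is the reason $\mathcal{L}_{i,j}$ is defined differently from $\mathcal{L}_{i,j_1,j_2}$ in Lemma \ref{lem:normalizedRecursion}, where both tails of the product carry modifications.
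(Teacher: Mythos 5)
Your proposal is correct and follows essentially the same route as the paper's own proof: express the marginal as a ratio of partition functions, pass to $\mathcal{G}_v$, normalize to probabilities, and expand each term $\mathbf{P}_{\mathcal{G}_v,\mathcal{L}}(c(v_i)\neq k,\ 1\le i\le m)$ by the chain rule, identifying each conditional factor with $1-\mathbf{P}_{\mathcal{G}_v,\mathcal{L}_{i,k}}(c(v_i)=k)$. Your closing remark correctly explains why the list modification here is one-sided, in contrast to the two-sided modification in Lemma \ref{lem:normalizedRecursion}.
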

	\begin{proof}
	\begin{align}
		\mathbf{P}_{\mathcal{G},\mathcal{L}}(c(v) = j) &= \frac{Z_{\mathcal{G},\mathcal{L}}  (c(v) = j)}{ \sum_{k \in  L(v)} Z_{\mathcal{G},\mathcal{L}}(c(v) = k)}    \notag \\
		&= \frac{  Z_{\mathcal{G}_v, \mathcal{L}}  (c(v_i)  \neq j, \ 1 \leq i \leq m)  }{  \sum_{k \in L(v)} Z_{\mathcal{G}_v, \mathcal{L}}  (c(v_i)  \neq k, \ 1 \leq i \leq m) }   \notag \\
		& = \frac{  \mathbf{P}_{\mathcal{G}_v, \mathcal{L}}  (c(v_i)  \neq j, \ 1 \leq i \leq m)  }{  \sum_{k \in L(v)} \mathbf{P}_{\mathcal{G}_v, \mathcal{L}}  (c(v_i)  \neq k, \ 1 \leq i \leq m) }    \label{interprod}
	\end{align}	
	Now for any $k \in L(v)$, 
	\begin{align*}
		\mathbf{P}_{\mathcal{G}_v, \mathcal{L}}  (c(v_i)  \neq k, \ 1 \leq i \leq m) &= \mathbf{P}_{\mathcal{G}_v,\mathcal{L}}  (c(v_1) \neq k)  \prod_{i=2}^{m} \mathbf{P}_{\mathcal{G}_v, \mathcal{L}}
		  ( c(v_i) \neq k  |  c(v_l) \neq k, \ 1 \leq l \leq k-1  ). \\
		&= \prod_{i=1}^{m} \mathbf{P}_{\mathcal{G}_v, \mathcal{L}_{i,k}} (c(v_i) \neq k).
	\end{align*}
	Substituting this into (\ref{interprod}) completes the proof of the lemma.
	\end{proof}
	
	Before proceeding to the proof of Theorem \ref{lem:wsm}, we first establish upper and lower bounds on the marginal probabilities associated with vertex $v$.
	
	\begin{lem} \label{lem:probub}
		For every $j \in L(v)$ and for $l = 1,2$ the following bounds hold.
		\begin{align}
			&\mathbf{P}(c(v) = j | C_l) \leq 1/\beta.  \label{easyub} \\
			&\mathbf{P}(c(v) = j | C_l) \leq { \left( m \alpha e^{-\frac{1}{\alpha} (1+ 1/\beta)} \right) }^{-1}     \label{ub} \\
			&\mathbf{P}(c(v) = j | C_l) \geq q^{-1} (1 - 1/\beta)^{\Delta}. \label{lb}
		\end{align}
		\end{lem}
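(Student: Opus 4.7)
The plan is to establish bounds \eqref{easyub}, \eqref{ub}, and \eqref{lb} by strong induction on the size of the finite graph effectively present after conditioning on $C_l$; the reduction used in the proof of Theorem \ref{lem:singlepointssm} shows that the remaining graph-list pair still satisfies Assumption \ref{assume}. The base case is $m = \Delta(v) = 0$: then $\mathbf{P}(c(v) = j | C_l) = 1/|L(v)|$, and all three bounds follow from $|L(v)| \geq \beta$. In the inductive step, Lemma \ref{lem:recursion} gives
\begin{align*}
\mathbf{P}(c(v) = j | C_l) = \frac{N_j}{\sum_{k \in L(v)} N_k}, \qquad N_k := \prod_{i=1}^m (1 - p_i^{(k)}),
\end{align*}
where $p_i^{(k)}$ is a marginal in the smaller graph $(\mathcal{G}_v, \mathcal{L}_{i,k})$ and by induction $p_i^{(k)} \leq 1/\beta$. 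Bound \eqref{lb} is then immediate: $N_j \geq (1 - 1/\beta)^m \geq (1 - 1/\beta)^\Delta$, while $\sum_{k \in L(v)} N_k \leq |L(v)| \leq q$.

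The heart of the argument is bound \eqref{ub}, where I would lower bound the denominator by AM-GM, $\sum_{k \in L(v)} N_k \geq |L(v)| (\prod_{k \in L(v)} N_k)^{1/|L(v)|}$. Using the calculus inequality $-\log(1 - x) \leq x(1 + 1/\beta)$ on $[0, 1/\beta]$ (which follows from $-\log(1 - x) \leq x + x^2$ on $[0, 1/2]$, valid here since $\beta \geq \sqrt{2}/(\sqrt{2} - 1)$ yields $1/\beta \leq 1 - 1/\sqrt{2} < 1/2$), I would swap the product order to obtain
\begin{align*}
\prod_{k \in L(v)} N_k = \prod_{i=1}^m \prod_{k \in L(v)} (1 - p_i^{(k)}) \geq \exp\!\Bigl(-(1 + 1/\beta) \sum_{i=1}^m \sum_{k \in L(v)} p_i^{(k)}\Bigr).
\end{align*}
Together with the key sub-claim $\sum_{k \in L(v)} p_i^{(k)} \leq 1$ for each $i$, and using $|L(v)| \geq \alpha m$ so that $m/|L(v)| \leq 1/\alpha$, this gives $\sum_k N_k \geq |L(v)| e^{-(1 + 1/\beta)/\alpha} \geq \alpha m \, e^{-(1 + 1/\beta)/\alpha}$, and dividing by $N_j \leq 1$ yields \eqref{ub}.

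For bound \eqref{easyub} I would argue in two regimes. The strict inequality $(1 - 1/\beta) \alpha e^{-(1 + 1/\beta)/\alpha} > 1$ in Assumption \ref{assume} rearranges to $\beta e^{(1 + 1/\beta)/\alpha}/\alpha < \beta - 1$, so for all $m$ exceeding the bounded threshold $m_0 := \lceil \beta e^{(1 + 1/\beta)/\alpha}/\alpha \rceil$ bound \eqref{ub} immediately gives \eqref{easyub}; for the finitely many $m \leq m_0$ one falls back to the cruder estimate $\sum_k N_k \geq (\alpha m + \beta)(1 - 1/\beta)^m$ and verifies $(\alpha m + \beta)(1 - 1/\beta)^m \geq \beta$, which is precisely where the quantitative conditions on $\alpha$ and $\beta$ in Assumption \ref{assume} are used. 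The main obstacle throughout is the sub-claim $\sum_{k \in L(v)} p_i^{(k)} \leq 1$: the events $\{c(v_i) = k\} \cap \bigcap_{l < i}\{c(v_l) \neq k\}$ are pairwise disjoint over $k \in L(v)$, so the sum of their probabilities is at most $1$, but translating this into the conditional probabilities $p_i^{(k)} = \mathbf{P}(c(v_i) = k \mid c(v_l) \neq k, \, l < i)$ requires a correlation inequality of the form $p_i^{(k)} \leq \mathbf{P}_{\mathcal{G}_v, \mathcal{L}}(c(v_i) = k)$, whose justification leans on the triangle-freeness of $\mathcal{G}$ (ensuring that $v_1, \ldots, v_m$ are pairwise non-adjacent in $\mathcal{G}_v$, so that the constraints $c(v_l) \neq k$ do not propagate correlation to $c(v_i)$ in a way that breaks the inequality).
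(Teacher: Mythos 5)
Your overall architecture for \eqref{ub} and \eqref{lb} (the recursion of Lemma \ref{lem:recursion}, AM--GM on the denominator, and the Taylor bound $-\log(1-x)\leq (1+1/\beta)x$ on $[0,1/\beta]$) matches the paper, but the step you yourself flag as ``the main obstacle'' --- the sub-claim $\sum_{k\in L(v)} p_i^{(k)}\leq 1$ --- is genuinely missing, and the route you sketch for it does not work. The monotonicity statement $p_i^{(k)}\leq \mathbf{P}_{\mathcal{G}_v,\mathcal{L}}(c(v_i)=k)$ is not implied by $v_l$ and $v_i$ being non-adjacent: deleting the color $k$ from $L(v_l)$ perturbs the measure, and that perturbation can reach $v_i$ through paths in $\mathcal{G}_v$ avoiding $v$; correlation inequalities of this monotone type are exactly what one does \emph{not} have for colorings. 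The paper's mechanism is different and sharper: it first conditions on a coloring $\mathbf{c}$ of all vertices at distance two from $v$. By triangle-freeness the $v_i$ are pairwise non-adjacent, so given $\mathbf{c}$ each $v_i$ is independent of everything else and its conditional marginal is \emph{unchanged} (an equality, not an inequality) by the list modifications at the other $v_l$: $\mathbf{P}_{\mathcal{G}_v,\mathcal{L}_{i,k}}(c(v_i)=k\mid\mathbf{c})=\mathbf{P}_{\mathcal{G}_v,\mathcal{L}}(c(v_i)=k\mid\mathbf{c})$, whence $\sum_k t_{ik}\leq 1$ by disjointness of $\{c(v_i)=k\}$ over $k$. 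The whole bound \eqref{ub} is then proved conditionally on each $\mathbf{c}$ and averaged. Without this conditioning device your argument has a hole at its central step.

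Two further points. First, \eqref{easyub} needs none of your two-regime machinery: conditioned on \emph{any} coloring of the neighbors of $v$, the vertex $v$ is uniform over its available colors, of which there are at least $|L(v)|-\Delta(v)\geq(\alpha-1)\Delta(v)+\beta\geq\beta$; averaging gives \eqref{easyub} in one line, with no induction. This matters because your fallback for small $m$, the claim $(\alpha m+\beta)(1-1/\beta)^m\geq\beta$ for all $m\leq m_0$, is asserted but not verified, and it is delicate at best near the boundary of Assumption \ref{assume}; you should not rest \eqref{easyub} on it. Second, once \eqref{easyub} is obtained directly, the induction scaffolding becomes unnecessary: the bound $t_{ij}\leq 1/\beta$ for the neighbors follows from the same one-line argument applied in $(\mathcal{G}_v,\mathcal{L}_{i,j})$, which still satisfies Assumption \ref{assume}.
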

	\begin{proof}	These bounds were proved in \cite{GamarnikKatz08} with a different constant, i.e. $\alpha > \alpha^{**} \approx 2.84$. Here we prove the bound when Assumption \ref{assume} holds.
				In this proof we assume $l=1$. The case $l=2$ follows by an identical argument.
				Let $\mathcal{C}$ denote the set of all possible colorings of the 
				children $v_1, \ldots, v_m$ of $v$. Note that for any $\mathbf{c} \in \mathcal{C}$, $\mathbf{P}(c(v) = j | \mathbf{c}) \leq \frac{1}{|L(v)| - \Delta(v)} \leq 1/\beta$.
				and (\ref{easyub}) follows.
							
				To prove (\ref{ub}) we will show that for every coloring of the neighbors of the neighbors of $v$, the bound is satisfied.
				So, first fix a coloring $\mathbf{c}$ of the vertices at distance two from $v$. Conditioned on this coloring, define for $j \in L(v)$ the marginal
				\begin{align*}
					t_{ij} = \mathbf{P}_{\mathcal{G}_v,\mathcal{L}_{i,j}}(c(v_i) = j | \mathbf{c}). 
				\end{align*}
				Note that by (\ref{easyub}) we have $t_{ij} \leq 1/\beta$.
				Because $\mathcal{G}$ is triangle-free, there are no edges between the neighbors of $v$ and once we condition on $\mathbf{c}$, we have
				\begin{align*}
					 \mathbf{P}_{\mathcal{G}_v,\mathcal{L}_{i,j}}(c(v_i) = j | \mathbf{c}) =  \mathbf{P}_{\mathcal{G}_v,\mathcal{L}}(c(v_i) = j | \mathbf{c}).
				\end{align*}
				So we obtain
				\begin{align}
				 	\sum_{j \in L(v)} t_{ij} = \sum_{j \in L(v) \bigcap L(v_i)}    \mathbf{P}_{\mathcal{G}_v,\mathcal{L}}(c(v_i) = j | \mathbf{c})   \leq 1. \label{lessthanone}
				 \end{align}
				From Lemma \ref{lem:recursion} we have 
				\begin{align}
					\mathbf{P}(c(v) = j | C_1) = \frac{\prod_{i=1}^{m} \left(  1 -  t_{ij}  \right) }{ \sum_{k \in L(v)} \prod_{i=1}^{m} \left(  1 - t_{ik} \right) }   \label{ubinter}
					 \leq \frac{1}{    \sum_{k \in L(v)} \prod_{i=1}^{m} \left(  1 - t_{ik} \right) }. 
 				\end{align}
				Using Taylor expansion for $\log(1-x)$, we obtain
				\begin{align*}
					\prod_{i=1}^{m} (1-t_{ik}) = \prod_{i=1}^{m} e^{\log(1 - t_{ik})} = \prod_{i=1}^{m} e^{-t_{ik} - \frac{1}{2(1 - \theta_{ik})^2}t_{ik}^2}.
				\end{align*}
				where $0 \leq \theta_{ik} \leq t_{ik}$. So $\theta_{ik}$ satisfies $(1 - \theta_{ik})^2 \geq (1 - 1/\beta)^2 \geq 1/2$ by Assumption \ref{assume}. Thus, we obtain
				\begin{align*}
					\prod_{i=1}^{m} (1-t_{ik}) \geq \prod_{i=1}^{m} e^{- (1 + 1/\beta) t_{ik}} = e^{-(1 + 1/\beta) \sum_{i=1}^{m} t_{ik}}
				\end{align*}
				Using the fact that arithmetic mean is greater than geometric mean and using (\ref{lessthanone}), we get
				\begin{align*}
					 \sum_{k \in L(v) } \prod_{i=1}^{m} (1-t_{ik}) &\geq q_v \left( \prod_{k \in L(v)} \prod_{i=1}^{m} (1-t_{ik}) \right)^{1/q_v} \\
					 &\geq q_v exp \left( {- q_v^{-1} (1+1/\beta) \sum_{i=1}^{m} \sum_{k \in L(v)} t_{i,k}  } \right)\\
					 &\geq (\alpha m + \beta) e^{-(1+1/\beta)\frac{m}{\alpha m + \beta}} \\
					 & \geq \alpha m e^{-(1+1/\beta) \frac{1}{\alpha}}.
				\end{align*}
				Combining with (\ref{ubinter}) the proof of (\ref{ub}) is now complete.
				
				From (\ref{easyub}) we have $\prod_{i=1}^{m} (1-t_{ij}) \geq (1 - 1/\beta)^{m} \geq (1 - 1/\beta)^{\Delta}$. Also
				$ \sum_{k \in L(v)} \prod_{i=1}^{m} \left(  1 - t_{ik} \right) \leq q_v \leq q$.
				So we have
				\begin{align*}
					\mathbf{P}(c(v) = j | C_1) =   \frac{\prod_{i=1}^{m} \left(  1 -  t_{ij}  \right) }{ \sum_{k \in L(v)} \prod_{i=1}^{m} \left(  1 - t_{ik} \right) } \geq q^{-1} (1 - 1/\beta)^{\Delta}.
				\end{align*}
	\end{proof}

	 For $j \in L(v)$, define
	\begin{align*}
		x_j &= \mathbf{P}_{\mathcal{G},\mathcal{L}}(c(v) = j | C_1), \\
		y_j &= \mathbf{P}_{\mathcal{G},\mathcal{L}}(c(v) = j | C_2),
	\end{align*}
	and the vector of marginals
	\begin{align*}
		\mathbf{x} &= \left( x_j : j \in L(v) \right), \\
		\mathbf{y} &= \left( y_j : j \in L(v) \right).
	\end{align*}

	We define a suitably chosen error function which we will use to establish decay of correlations and prove Theorem \ref{lem:wsm}. This error function $\mathbf{E}(\mathbf{x},\mathbf{y})$ is defined as
	\begin{align*}
		\mathbf{E}(\mathbf{x},\mathbf{y}) = \max_{j \in L(v)}  \log\left(  \frac{x_j}{y_j} \right)  - \min_{j \in L(v)}  \log\left(  \frac{x_j}{y_j} \right).
	\end{align*}
	By (\ref{lb}) of Lemma \ref{lem:probub} we have $x_j, y_j >0$ for $j \in L(v)$. So the above expression is well-defined.
	Let $j_1 \in L(v)$ ($j_2 \in L(v)$) achieve the maximum (minimum) in the above expression.
		Recall that for given $j_1,j_2$, we denote by $\mathcal{L}_{i,j_1,j_2}$ the list associated with graph $\mathcal{G}_v$ which is
		obtained from $\mathcal{L}$ by removing the color $j_1$ from 
		the lists $L(v_k)$ for $k < i$ and removing the color $j_2$ from the lists $L(v_k)$ for $k>i$.
		Define for each $1 \leq i \leq m$ and $j \in L_{i,j_1,j_2}(v_i)$ the marginals
		\begin{align*}
			x_{ij} &= \mathbf{P}_{\mathcal{G}_v,\mathcal{L}_{i,j_1,j_2}} (c(v_i) = j | C_1) , \\
			y_{ij} &= \mathbf{P}_{\mathcal{G}_v,\mathcal{L}_{i,j_1,j_2}} (c(v_i) = j | C_2).
		\end{align*}
		and the corresponding vector of marginals
		\begin{align*}
			\mathbf{x_i} &= \left( x_{ij} : j \in L_{i,j_1,j_2}(v_i) \right), \\
			\mathbf{y_i} &= \left( y_{ij} : j \in L_{i,j_1,j_2}(v_i) \right).	
		\end{align*}

	First we prove the following useful fact regarding the terms appearing in the definition of the error function.
	\begin{lem} \label{lem:signs}
		With $x_j$ and $y_j$ defined as before, we have
		\begin{align*}
			&\max_{j \in L(v)}  \log\left(  \frac{x_j}{y_j} \right) \geq 0, \\
			&\min_{j \in L(v)}  \log\left(  \frac{x_j}{y_j} \right) \leq 0
		\end{align*}	
	\end{lem}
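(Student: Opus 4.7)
The plan is to prove both inequalities by a simple contradiction argument exploiting the fact that $\mathbf{x}$ and $\mathbf{y}$ are (conditional) probability distributions of the color assigned to $v$, hence $\sum_{j \in L(v)} x_j = \sum_{j \in L(v)} y_j = 1$.

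First I would argue the upper statement. Suppose for contradiction that $\max_{j \in L(v)} \log(x_j/y_j) < 0$. Then for every $j \in L(v)$ we have $x_j < y_j$. Summing this strict inequality over all $j \in L(v)$ yields $\sum_j x_j < \sum_j y_j$, contradicting the fact that both sums equal $1$. The argument for $\min_{j \in L(v)} \log(x_j/y_j) \leq 0$ is symmetric: assuming $x_j > y_j$ for all $j$ leads to $\sum_j x_j > \sum_j y_j$, another contradiction.

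The only subtlety worth flagging is that the logarithms are well-defined. This follows from the lower bound $\mathbf{P}(c(v)=j\mid C_l) \geq q^{-1}(1-1/\beta)^{\Delta} > 0$ established in inequality~(\ref{lb}) of Lemma~\ref{lem:probub}, which guarantees $x_j, y_j > 0$ for all $j \in L(v)$ so that $\log(x_j/y_j)$ is finite. There is no real obstacle here; the lemma is essentially a normalization statement about probability vectors, and its role is purely to justify that the error function $\mathbf{E}(\mathbf{x},\mathbf{y})$ can be bounded below by the absolute value of either of its two terms individually in the subsequent contraction analysis.
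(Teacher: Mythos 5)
Your proof is correct and is essentially the paper's own argument: both rest on the normalization $\sum_j x_j = \sum_j y_j = 1$, which forces some coordinate with $x_j \geq y_j$ and some with $x_j \leq y_j$ (the paper phrases this via $q_v \max_j (x_j - y_j) \geq \sum_j (x_j - y_j) = 0$ rather than by contradiction, but the content is identical). Your remark on well-definedness of the logarithms via the lower bound (\ref{lb}) matches the paper's own justification given just before the lemma.
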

	
	\begin{proof}
		We have $\sum_{j \in L(v)} (x_j - y_j) = \sum_{j \in L(v)} x_j - \sum_{j\in L(v)} y_j = 0.$
		This gives
		\begin{align*}
			& q_v \max_{j \in L(v)} (x_j - y_j) \geq \sum_{j \in L(v)} {x_j - y_j} = 0 \\
			\mbox{which implies} \quad & \max_{j \in L(v)}  \log\left(  \frac{x_j}{y_j} \right) \geq 0.
		\end{align*}
		Similarly, 
		\begin{align*}
			&q_v \min_{j \in L(v)} (x_j - y_j) \leq \sum_{j \in L(v)} x_j - y_j= 0 \\
			\mbox{which implies} \quad  &\min_{j \in L(v)}  \log\left(  \frac{x_j}{y_j} \right) \leq 0.
		\end{align*}
	\end{proof}

	We are now ready to prove the following key result which shows that the distance between the marginals induced by the two different boundary conditions measured with respect to the metric defined by the error function
	 $\mathbf{E}(\mathbf{x},\mathbf{y})$ contracts.
	 Let $\epsilon \in (0,1)$ be such that
	 \begin{align*}
		(1-\epsilon) = \frac{1}{(1 - 1/\beta)\alpha e^{-\frac{1}{\alpha} (1+ 1/\beta)} }.
	\end{align*}	 
	Assumption \ref{assume} guarantees that such an $\epsilon$ exists.
 	\begin{lem} \label{lem:generalcontraction}
	Let $m_i = \Delta_{\mathcal{G}_v}(v_i)$. Then
	\begin{align} \label{eq:generalcontraction}
		\frac{1}{m} \mathbf{E}(\mathbf{x},\mathbf{y}) \leq (1-\epsilon) \max_{i:m_i>0} \frac{1}{m_i} \mathbf{E}(\mathbf{x_i},\mathbf{y_i}).
	\end{align}
	The expression on the right hand side of (\ref{eq:generalcontraction}) is interpreted to be $0$ if $m_i=0$ for all $i$.
	\end{lem}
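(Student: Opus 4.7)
The plan is to apply Lemma \ref{lem:normalizedRecursion} separately under the two boundary conditions $C_1$ and $C_2$, take logarithms, and subtract. Using the maximizer $j_1$ and minimizer $j_2$ that define $\mathbf{E}(\mathbf{x},\mathbf{y})$, this reduces the left-hand side to a sum over the neighbors of $v$:
\begin{align*}
\mathbf{E}(\mathbf{x},\mathbf{y}) = \log\frac{x_{j_1}/x_{j_2}}{y_{j_1}/y_{j_2}} = \sum_{i=1}^m h_i, \qquad h_i := \log\frac{(1-x_{ij_1})(1-y_{ij_2})}{(1-y_{ij_1})(1-x_{ij_2})}.
\end{align*}
The task is thus to establish $\sum_i h_i \leq m(1-\epsilon)\max_i \mathbf{E}(\mathbf{x_i},\mathbf{y_i})/m_i$.

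For each $i$, I would derive a pointwise estimate of the form $h_i \leq \kappa_i\,\mathbf{E}(\mathbf{x_i},\mathbf{y_i})$ with $\kappa_i$ controlled by the marginals at $v_i$. The natural route is a Taylor/mean-value analysis: writing each factor $\log\frac{1-x_{ij_k}}{1-y_{ij_k}}$ as $(y_{ij_k}-x_{ij_k})/(1-\xi_{i,k})$ with $\xi_{i,k}$ between $x_{ij_k}$ and $y_{ij_k}$, the denominator is bounded by $1/(1-1/\beta)$ via inequality (\ref{easyub}). Substituting $r_{ij} := \log(x_{ij}/y_{ij})$ converts $x_{ij}-y_{ij}$ into $y_{ij}(e^{r_{ij}}-1)$, and Lemma \ref{lem:signs} applied to $(\mathbf{x_i},\mathbf{y_i})$ yields $|r_{ij_k}| \leq \mathbf{E}(\mathbf{x_i},\mathbf{y_i})$. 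The target is a bound of the form
\begin{align*}
h_i \leq \frac{\bar{y}_i}{1-1/\beta}\, \mathbf{E}(\mathbf{x_i},\mathbf{y_i}),
\end{align*}
where $\bar{y}_i$ is a nonnegative combination of the four marginals $\{x_{ij_k},y_{ij_k}\}_{k=1,2}$ that stays below their maximum.

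To close the recursion, I would apply Lemma \ref{lem:probub}(\ref{ub}) at vertex $v_i$ inside the graph $\mathcal{G}_v$ with list $\mathcal{L}_{i,j_1,j_2}$. Assumption \ref{assume} is preserved for this modified graph-list pair since $\alpha > 1$, so every such marginal satisfies $x_{ij_k}, y_{ij_k} \leq 1/[m_i\alpha e^{-(1+1/\beta)/\alpha}] = (1-\epsilon)(1-1/\beta)/m_i$. Substituting into the pointwise bound above cancels the factor $(1-1/\beta)$ and leaves $h_i \leq \frac{1-\epsilon}{m_i}\mathbf{E}(\mathbf{x_i},\mathbf{y_i})$. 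Summing over $i$ and replacing each $\mathbf{E}(\mathbf{x_i},\mathbf{y_i})/m_i$ by its maximum over $i$ with $m_i>0$ gives $\mathbf{E}(\mathbf{x},\mathbf{y}) \leq m(1-\epsilon)\max_{i:m_i>0}\mathbf{E}(\mathbf{x_i},\mathbf{y_i})/m_i$, as desired. The degenerate case in which $m_i=0$ for every $i$ forces $\mathbf{E}(\mathbf{x},\mathbf{y})=0$ directly from Lemma \ref{lem:recursion}, since every $v_i$ is disconnected from the boundary in $\mathcal{G}_v$.

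The main obstacle is the Taylor/mean-value step: the pointwise bound on $h_i$ must be sharp enough that the $(1-1/\beta)$ factor arising from bounding $1/(1-t)$ on $[0,1/\beta]$ cancels exactly with the $(1-1/\beta)$ built into the bound of Lemma \ref{lem:probub}(\ref{ub}), leaving precisely $(1-\epsilon)$. The $\max-\min$ structure of the error function $\mathbf{E}$ is essential here: the two halves of $h_i$ cannot be bounded independently, because individually they only control $|r_{ij_k}|$ rather than the sharper difference $r_{ij_1}-r_{ij_2}$. They must instead be combined---via an integration along a line in the $(r_{ij_1},r_{ij_2})$-plane from $(0,0)$ to the actual log-ratios, or a comparable argument---so that the resulting estimate sees only the difference. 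This cancellation is precisely what refines the $\ell_\infty$-based contraction of \cite{GamarnikKatz08} and permits relaxing the condition from $\alpha > \alpha^{**}$ to $\alpha > \alpha^*$.
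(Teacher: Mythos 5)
Your decomposition, your use of Lemma \ref{lem:normalizedRecursion}, the mean-value linearization, and the appeal to Lemma \ref{lem:probub} on the pair $(\mathcal{G}_v,\mathcal{L}_{i,j_1,j_2})$ are all exactly the paper's route, and the arithmetic $1/[m_i\alpha e^{-(1+1/\beta)/\alpha}]=(1-\epsilon)(1-1/\beta)/m_i$ is the intended cancellation. The one step you flag as the ``main obstacle'' is real, but your proposed fix is slightly misdirected: no estimate of the form $h_i\lesssim |r_{ij_1}-r_{ij_2}|$ is available, because the two mean-value coefficients $c_{i1},c_{i2}$ are in general different, so $h_i=c_{i1}(w_{ij_1}-z_{ij_1})-c_{i2}(w_{ij_2}-z_{ij_2})$ cannot ``see only the difference'' of the two particular log-ratios. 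What saves the argument --- and is how the paper closes it --- is that both coefficients lie in $(0,(1-\epsilon)/m_i]$ and, by Lemma \ref{lem:signs} applied at $v_i$, $\max_k\{w_{ik}-z_{ik}\}\ge 0\ge\min_k\{w_{ik}-z_{ik}\}$; hence each of the two terms is \emph{separately} dominated, the first by $\frac{1-\epsilon}{m_i}\max_k\{w_{ik}-z_{ik}\}$ and the second by $-\frac{1-\epsilon}{m_i}\min_k\{w_{ik}-z_{ik}\}$ (positivity of the coefficient plus the sign of the max/min is what prevents the factor of $2$ you were worried about). Summing gives $h_i\le\frac{1-\epsilon}{m_i}\bigl(\max_k-\min_k\bigr)=\frac{1-\epsilon}{m_i}\mathbf{E}(\mathbf{x_i},\mathbf{y_i})$ with no line integration in the $(r_{ij_1},r_{ij_2})$-plane needed. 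With that substitution your proof is complete and coincides with the paper's; your handling of the degenerate cases ($m_i=0$, and implicitly $j_k\notin L_{i,j_1,j_2}(v_i)$, where the corresponding factor of $h_i$ vanishes) is also the paper's.
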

	
	\begin{proof}
		If $j_1 = j_2$, then $\mathbf{E}(\mathbf{x},\mathbf{y})  = 0$.	
		Otherwise,
		\begin{align*}
			\mathbf{E}(\mathbf{x},\mathbf{y}) &= \log\left(  \frac{x_{j_1}}{y_{j_1}} \right)  - \log\left(  \frac{x_{j_2}}{y_{j_2}} \right) \\
			&= \log\left(  \frac{x_{j_1}}{x_{j_2}} \right)  - \log\left(  \frac{y_{j_1}}{y_{j_2}} \right). \\
		\end{align*}
		Introduce the following variables:
		\begin{align*}
			z_{ij} &= \log \left( x_{ij} \right) \\
			w_{ij} &= \log \left(  y_{ij} \right) 
		\end{align*}
		Using the recursion in Lemma \ref{lem:normalizedRecursion} we have
		\begin{align*}
			\mathbf{E}(\mathbf{x},\mathbf{y}) &= \log \left( \prod_{i=1}^{m}  \frac{1 - e^{z_{i j_1}}}{1 - e^{z_{i j_2}}} \right) - \log \left( \prod_{i=1}^{m}  \frac{1 - e^{w_{i j_1}}}{1 - e^{w_{i j_2}}} \right) \\
			&= \left[ \sum_{i=1}^{m} \log \left( 1 - e^{z_{i j_1}} \right) - \log \left( 1 - e^{w_{i j_1}} \right) \right] - \left[ \sum_{i=1}^{m} \log \left( 1 - e^{z_{i j_2}} \right) - \log \left( 1 - e^{w_{i j_2}} \right) \right].
		\end{align*}
		For $j = j_1,j_2$ let
		\begin{align*}
			z_j &=  \sum_{i=1}^{m} \log \left( 1 - e^{z_{i j}} \right) \\
			w_j &=  \sum_{i=1}^{m} \log \left( 1 - e^{w_{i j}} \right).
		\end{align*}
		Then we can rewrite $\mathbf{E}(\mathbf{x},\mathbf{y})$ as 
		\begin{align}
			\mathbf{E}(\mathbf{x},\mathbf{y}) = (z_{j_1} - w_{j_1}) - (z_{j_2} - w_{j_2}).    \label{rewriteerror}
		\end{align}	
			Define the continuous function $f : [0,1] \rightarrow \mathbbm{R}$ as
		\begin{align*}
			f(t) = \sum_{i=1}^{m} \log( 1 - e^{z_{ij}}) - \sum_{i=1}^{m} \log( 1 - e^{z_{ij} + t(w_{ij}- z_{ij})} ).
		\end{align*}
		Then we have $f(0) = 0$ and $f(1) = z_j - w_j$. Applying the mean value theorem, there exists $t \in (0,1)$ such that
		\begin{align*}
			z_j - w_j = f(1) - f(0) = f'(t).
		\end{align*}
		Computing the expression for $f(t)$, we get
		\begin{align*}
			z_j - w_j = \sum_{i=1}^{m} \frac{e^{z_{ij} + t(w_{ij}- z_{ij})}}{1 - e^{z_{ij} + t(w_{ij}- z_{ij})}} (w_{ij} - z_{ij}).
		\end{align*}
		Observe that if  $j \notin L_{i,j_1,j_2}(v_i)$, then $\mathbf{P}_{\mathcal{G}_v,L_{i,j_1,j_2}} (c(v_i) = j | C_1) = \mathbf{P}_{\mathcal{G}_v,L_{i,j_1,j_2}} (c(v_i) = j | C_2) = 0$.
		Hence for $j \notin L_{i,j_1,j_2}(v_i)$, we have $w_{ij}=z_{ij}$. Also if $m_i=0$ then $v$ is an isolated vertex in $\mathcal{G}_v$ and in this case we also have $w_{ij}=z_{ij}$. Using this fact, we have
		\begin{align*}
			z_j - w_j = \sum_{   i:m_i >0 }   \mathbf{1}_{ \{  j \in L_{i,j_1,j_2}(v_i)    \}}   \left( \frac{e^{z_{ij} + t(w_{ij}- z_{ij})}}{1 - e^{z_{ij} + t(w_{ij}- z_{ij})}}    \right) (w_{ij} - z_{ij})   .
		\end{align*}
			From convexity of $e^x$ and Lemma \ref{lem:probub} we have
	\begin{align*}
		0 < e^{z_{ij} + t(w_{ij}- z_{ij})} \leq  t e^{w_{ij}} + (1-t) e^{z_{ij}} \leq  { \left( m_i \alpha e^{-\frac{1}{\alpha} (1+ 1/\beta)} \right) }^{-1}. 
	\end{align*}
	Similarly, again using Lemma \ref{lem:probub} we have
	\begin{align*}
		0 < \frac{1}{1 - e^{z_{ij} + t(w_{ij}- z_{ij})} } \leq \frac{1}{1 - 1/\beta}.
	\end{align*}
	Combining we have for $j = j_1, j_2$,
	\begin{align*}
		0 < \frac{e^{z_{ij} + t(w_{ij}- z_{ij})}}{1 - e^{z_{ij} + t(w_{ij}- z_{ij})}} \leq \frac{1}{  (1 - 1/\beta) \left(   m_i \alpha e^{-\frac{1}{\alpha} (1+ 1/\beta)} \right)  } = \frac{1 - \epsilon}{m_i}.
	\end{align*}
	From Lemma \ref{lem:signs} we have $\max_{k \in L_{i,j_1,j_2}(v_i)} \{ w_{ik} - z_{ik} \} \geq 0 $ and  $\min_{k \in L_{i,j_1,j_2}(v_i)} \{w_{ik} - z_{ik} \} \leq 0 $.
		Using this
		\begin{align*}
			z_{j_1} - w_{j_1} \leq \sum_{i:m_i>0} \frac{1 - \epsilon}{m_i} \max_{k \in  L_{i,j_1,j_2}(v_i)} \{w_{ik} - z_{ik}\},
		\end{align*}
		and
		 \begin{align*}
			z_{j_2} - w_{j_2} \geq \sum_{i:m_i>0} \frac{1 - \epsilon}{m_i} \min_{k \in  L_{i,j_1,j_2}(v_i)} \{w_{ik} - z_{ik}\}.
		\end{align*}
		By using the above bounds in (\ref{rewriteerror}) we get 
		\begin{align}
			\mathbf{E}(\mathbf{x},\mathbf{y}) &\leq \sum_{i:m_i>0} \left( \frac{1- \epsilon}{m_i} \right) \left[ \max_{k \in  L_{i,j_1,j_2}(v_i)} \{w_{ik} - z_{ik}\} - \min_{k \in  L_{i,j_1,j_2}(v_i)} \{w_{ik} - z_{ik}\} \right]  \notag \\   
			& = \sum_{i:m_i>0} \left( \frac{1- \epsilon}{m_i} \right) \mathbf{E}(\mathbf{x}_{i}, \mathbf{y}_{i})  \notag  \\    
			& \leq m  \max_{i:m_i>0} \left( \frac{1- \epsilon}{m_{i}} \right) \mathbf{E}(\mathbf{x}_{i}, \mathbf{y}_{i}).  \label{finalmax}
		\end{align}
		The proof of Lemma \ref{lem:generalcontraction} is now complete.
	\end{proof}
	We now use Lemma \ref{lem:generalcontraction} to complete the proof of Theorem \ref{lem:wsm}. Let $i^*$ achieve the maximum in (\ref{finalmax}), that is,
	$i^* = \arg \max_i \frac{1}{m_i} \mathbf{E}_{\mathcal{G}_v,\mathcal{L}_{i,j_1,j_2}}(\mathbf{x}_{i}, \mathbf{y}_{i}) $. 
	Let $\mathcal{G}^1 = \mathcal{G}_v $, $\mathcal{L}^1 = \mathcal{L}_{i,j_1,j_2} $, $v^1 = v_{i^*}$ and $(\mathbf{x}^1, \mathbf{y}^1) = (\mathbf{x}_{i^*}, \mathbf{y}_{i^*})$.
	Lemma \ref{lem:generalcontraction} says that $\frac{1}{\Delta_{\mathcal{G}}(v)}  \mathbf{E}(\mathbf{x},\mathbf{y}) \leq (1 - \epsilon) \frac{1}{\Delta_{\mathcal{G}^1}(v^1)}  \mathbf{E}(\mathbf{x^1},\mathbf{y^1})$.
	Note that the graph list pair ($\mathcal{G}^1,\mathcal{L}^1$) 
	satisfies Assumption \ref{assume}. 
	 We can then apply apply Lemma \ref{lem:generalcontraction} to $v^1,\mathcal{G}^1,\mathcal{L}^1$ to obtain $v^2,\mathcal{G}^2,\mathcal{L}^2$ such that
	 $\frac{1}{\Delta_{\mathcal{G}}(v^1) } \mathbf{E}(\mathbf{x^1},\mathbf{y^1}) \leq (1 - \epsilon) \frac{1}{\Delta_{\mathcal{G}^2}(v^2)}  \mathbf{E}(\mathbf{x^2},\mathbf{y^2})$.
	 If we let $d = d(v,\partial \Psi)$, then
	applying Lemma \ref{lem:generalcontraction} successively $d$ times we obtain 
		\begin{align*}
			\frac{1}{\Delta_{\mathcal{G}}(v)} \mathbf{E}(\mathbf{x},\mathbf{y}) \leq (1-\epsilon) ^{d} \frac{1}{\Delta_{\mathcal{G}^d}(v^d)} 
			\mathbf{E}(\mathbf{x}^d,\mathbf{y}^d) \leq 2 \log \left(   \frac{q}{(1 - 1/\beta)^{\Delta}}  \right) {(1 - \epsilon)}^d,
		\end{align*}
		where the second inequality follows from Lemma \ref{lem:probub}.
		This gives for any $j \in L(v)$,
		\begin{align*}
			\log \left(  \frac{x_j}{y_j}\right) \leq \max_j \log \left(  \frac{x_j}{y_j}\right) \leq  \mathbf{E}(\mathbf{x},\mathbf{y}) \leq 2 \Delta ( \log q - \Delta \log(1 - 1/\beta )) (1-\epsilon) ^d
		\end{align*}
		Let $F = 2 \Delta ( \log q - \Delta \log(1 - 1/\beta ))$. The quantity $F$ depends only on the quantities defined in Assumption \ref{assume} and does not depend on the vertex $v$. 
		Let $d_0$ be large enough such that $\exp (F (1-\epsilon) ^{d_0} ) \leq 1 + 2 F(1-\epsilon) ^{d_0}$. Then for $d \geq d_0$, we have
		\begin{align*}
			\frac{\mathbf{P}(c(v) = j | C_1)}{ \mathbf{P}(c(v) = j | C_2)} \leq 1 + 2F e^{-\gamma d}.
		\end{align*}
		where $\gamma = -\log(1 - \epsilon)$. For $d \leq d_0$ 
		\begin{align*}
			\frac{\mathbf{P}(c(v) = j | C_1)}{ \mathbf{P}(c(v) = j | C_2)} \leq 1 + e^{F+\gamma d_0} e^{-\gamma d}.
		\end{align*}
		Taking $B = \max \{ e^{F+\gamma d_0} , 2F\}$ we get 
		\begin{align*}
			\frac{\mathbf{P}(c(v) = j | C_1)}{ \mathbf{P}(c(v) = j | C_2)} \leq 1 + B e^{-\gamma d}.
		\end{align*}
		The lower bound on the ratio of probabilities is obtained in a similar fashion. This completes the proof of Theorem \ref{lem:wsm}.
	
 \end{section}
 
 \begin{section}{Conclusion}
	In this paper, we proved that the strong spatial mixing for the list coloring problem holds for a general triangle free graph when for each vertex of the graph the size of its list is at least $\alpha \Delta(v) + \beta$ and 
	$\alpha > \alpha^* \approx 1.763$. This extends the previous results for strong spatial mixing of colorings for regular trees \cite{GeStefankovic11}
	 and for amenable triangle free graphs \cite{GoldbergMartinPaterson05}. An interesting next venture would be to use this long range 
	independence property to produce efficient approximation algorithms for counting colorings similar to \cite{GamarnikKatz08}. The main obstruction that we face here is that in order to prove contraction of 
	the recursion for $\alpha^* \approx 1.763$, we
	 need to use bounds on the probabilities mentioned in Lemma \ref{lem:probub}. This restricts our result to correlation decay with respect to distance in the graph theoretic sense
	instead of correlation decay in the computation tree, which was key to producing FPTAS in \cite{GamarnikKatz08}. 
	
	It would also be interesting to establish this result for smaller $\alpha$. It is conjectured that $\alpha = 1$ and $\beta = 2$ suffices but at the moment we are quite far from this result.
 \end{section}
 
\bibliographystyle{amsalpha}
\bibliography{Coloringbib}

\end{document}